\documentclass[11pt]{amsart}
\usepackage{amscd}
\usepackage[all]{xy}
\usepackage{graphicx}
\usepackage{amsmath}
\usepackage{amsfonts}
\usepackage{amssymb}
\usepackage{latexsym}
\usepackage{slashed}
\usepackage{soul}
\usepackage{comment}
\usepackage{color}
\usepackage{rotating}

\definecolor{brown}{RGB}{150,100,0}

\definecolor{purple}{RGB}{128,0,128}

\definecolor{grey}{RGB}{128,128,128}

\usepackage{amsmath, latexsym, amssymb}
\numberwithin{equation}{section}
\theoremstyle{plain}
\newtheorem{lemma}{Lemma}[section]
\newtheorem{proposition}[lemma]{Proposition}
\newtheorem{theorem}[lemma]{Theorem}
\newtheorem{corollary}[lemma]{Corollary}

\theoremstyle{definition}

\newtheorem{remark}[lemma]{Remark}
\newtheorem{example}[lemma]{Example}

\DeclareGraphicsRule{.tif}{png}{.png}{`convert #1 `dirname #1`/`basename #1 .tif`.png}
\newcommand{\R}{{\mathbb R}}
\newcommand{\C}{{\mathbb C}}
\newcommand{\F}{{\mathbb F}}

\newcommand{\Z}{{\mathbb Z}}

\newcommand{\Cc}{{\mathcal C}}    



\newcommand{\Rr}{{\mathcal R}}

\newcommand{\Ad}{{\rm Ad \,}}
\newcommand{\Om}{{\Omega}}
\newcommand{\om}{{\omega}}
\newcommand{\eps}{{\varepsilon}}
\newcommand{\Di}{{\rm Diff}}

\newcommand{\e}{{\mathfrak e}}

\newcommand{\g}{{\mathfrak g}}
\newcommand{\gl}{{\mathfrak {gl}}}
\newcommand{\h}{{\mathfrak h}}

\renewcommand{\u}{{\mathfrak u}}
\newcommand{\su}{{\mathfrak su}}

\newcommand{\ssl}{{\mathfrak {sl}}}

\newcommand{\la}{\langle}
\newcommand{\ra}{\rangle}


\newcommand{\INTO}{\hookrightarrow}

\newcommand{\rk}{{\rm rk }}

\newcommand{\St}{{\rm St}}
\newcommand{\Gal}{{\rm Gal}}
\newcommand{\GL}{{\rm GL}}
\newcommand{\SL}{{\rm SL}}
\newcommand{\Aut}{{\rm Aut}}
\newcommand{\SO}{{\rm SO}}
\newcommand{\U}{{\rm U}}


\date{\today}
\title[Classification of  $k$-forms  and   associated  geometry]
{Classification of  $k$-forms  on  $\R^n$   and   the existence  of  associated  geometry  on  manifolds}

\author{H\^ong V\^an L\^e}
\address{Institute of Mathematics  of the Czech  Academy of Sciences,
Zitna 25, 11567 Praha 1, Czech Republic}
\email{hvle@math.cas.cz}

\author{Ji\v r\'i  Van\v zura}
\address{Institute of Mathematics  of the Czech Academy of Sciences,
Zitna 25, 11567 Praha 1, Czech Republic}
\email{vanzura@math.cas.cz}

\thanks{The research of  HVL was  supported by the GA\v CR-project 18-00496S and  RVO:67985840.}

\begin{document}
	
\abstract  In this paper  we survey   methods  and results  of  classification  of $k$-forms (resp. $k$-vectors  on $\R^n$), understood as    description of the orbit space of the standard  $\GL(n, \R)$-action  on $\Lambda^k \R^{n*}$ (resp. on $\Lambda ^k \R^n$). We   discuss  the existence  of  related  geometry defined
by differential forms  on   smooth manifolds.  This paper also contains an Appendix by Mikhail Borovoi  on   Galois cohomology methods for  finding real forms  of complex orbits. 
\endabstract

\keywords{$GL(n,\R)$-orbits in $\Lambda^k\R^{n*}$, $\theta$-group, geometry defined by differential forms, Galois cohomology}
\subjclass[2010]{Primary:15-02, Secondary: 17B70, 53C10}

\maketitle

\section*{Preface}

Hamiltonian systems   were  one of  research  topics of  H\^ong V\^an L\^e in her undergraduate  study  and  calibrated geometry  was    the topic  of  her Ph.D. Thesis  under   guidance   of      Professor Anatoly   Timofeevich Fomenko.
Hamiltonian  systems  are defined  on symplectic  manifolds  and  calibrated   geometry
is  defined  by  closed  differential forms of  comass  one on  Riemannian manifolds.  Since that time  she  works frequently   on geometry     defined  by  differential  forms,   some  of her papers  were  written in collaboration with   Ji\v ri Van\v zura, \cite{LPV2008, LV2015, LV2017}.    We   dedicate  this survey on algebra and geometry of $k$-forms on $\R^n$  as well as on  smooth manifolds to Anatoly Timofeevich Fomenko  on the occasion of his 75th birthday and we wish him good
health, happiness and much success for the coming years.

\section{Introduction}\label{sec:intr}
Differential forms   are  excellent   tools for  the study of   geometry  and topology  of  manifolds  and their submanifolds as well as  dynamical systems   on them. 
 K\"ahler  manifolds, and more generally,    Riemannian manifolds  $(M, g)$  with non-trivial holonomy group  admit   parallel    differential forms and hence   calibrations   on $(M, g)$ \cite{HL1982},
\cite{Salamon1989}, \cite{LV2017}, \cite{FLSV2018}.  In the study  of     Riemannian manifolds  with  non-trivial holonomy groups   these  parallel  differential forms  are
 extremely important \cite{Bryant1987}, \cite{Joyce2000}. In  their seminal paper \cite{HL1982} Harvey-Lawson   used  calibrations  as  powerful tool  for the study of  geometry of  calibrated  submanifolds,   which   are volume minimizing.  Their paper  opened  a new field of calibrated  geometry   \cite{Joyce2007}  where   one finds more and  more  tools
 for the study  of calibrated   submanifolds using  differential forms, see e.g.,  \cite{FLSV2018}.  In 2000 Hitchin  initiated   the study of geometry defined by  a differential 3-form \cite{Hitchin2000},   and  in  a subsequent paper he   analyzed beautiful  geometry defined by differential forms in  low dimensions \cite{Hitchin2001}.  One starts  investigation   of  a differential  form  $\varphi^k$  of degree $k$  on a  manifold $M^n$ of dimension  $n$ by finding a   {\it normal  form}   of $\varphi^k$    at  a  point $x\in M^n$   and, if possible,  to find a normal  form of $\varphi^k$ up to  certain order  in a small neighborhood   $U(x) \subset   M^n$.  Finding a normal form  of   $\varphi^k$  at a point  $x \in M^n$  is the  same as     finding 
 a  canonical representative of  the equivalence class  of  $\varphi^k(x)$ in $\Lambda^k(T^*_xM^n)$, where   two $k$-forms
 on $T_xM^n$ are {\it equivalent} if they are in the same orbit  of the  standard $\GL(n,\R)$-action on $\Lambda ^k (T_x^*M^n)= \Lambda^k\R^{n*}$. We say that a manifold  $M^n$  is endowed  by  a differential  form $\varphi \in \Om^*(M^n)$ of   type $\varphi_0 \in \Lambda^* \R^{n*}$, if   for  all $ x\in M^n$  the equivalence class  of  $\varphi(x)\in \Lambda^* T^*_x M^n$  can be identified  with   the equivalent class  of  $\varphi_0 \in \Lambda^* \R^{n*}$ 
  via  a linear  isomorphism  $T_x M^n = \R^n$.
 Instead of   investigation of a   normal form of  a concrete    form $\varphi^k$,  we  may be  also interested
   in  a classification  of (equivalent) $k$-forms on $\R^n$,  understood  as   a description of the  moduli  space  of  equivalent $k$-forms on $\R^n$,  which could give us insight   on  a normal form of $\varphi^k$ and  could also  suggest   interesting  candidates  for the  geometry  defined by differential forms. Classification
 of $k$-forms on $\R^n$  is   a part  of  algebraic invariant  theory.
Recall  that {\it an  invariant} of an   equivalence relation on  a set  $S$, e.g., defined by   orbits of  an action   of a group $G$ on $S$,
is a mapping   from $S$ to  another   set $Q$  that is constant  on the equivalence classes.
A  system of invariants   is called {\it complete} if  it separates  any two equivalent classes.
If  a complete  system of invariants   consists of  one  element, we  call this invariant  complete.
In the  classical algebraic invariant  theory  one deals  mainly  with   actions  of classical or algebraic groups  on  some space of tensors of  a fixed type over a  vector space over  a field $\F$  \cite{Gurevich1964},   see   \cite{PV1994} for  a survey of modern  invariant theory  and source of  algebraic invariant theory.
From a geometric point  of view,  the most important invariants  of  a  form $\varphi^k$ on $\R^n$ are {\it the rank  of
$\varphi^k$} and {\it the stabilizer}    of  $\varphi^k$  under the action of  $\GL(n, \R)$. Recall that   the rank  of $\varphi^k$, denoted by $\rk \, \varphi^k$,   is  the dimension  of the  image  of the  linear operator $L_{\varphi^k}: \R^n \to \Lambda ^{k-1}  \R^{n*},  \:  v\mapsto  i_v \varphi^k$.  We denote  the stabilizer  of $\varphi^k$  by  $\St_{\GL(n, \R)} (\varphi^k)$,  and  in general,  we denote  by  $\St_G(x)$ the stabilizer  of  a point $x$ in a set $S$ where a group $G$  acts.
 A  form $\varphi^k \in \Lambda ^k \R^{n*}$  is called {\it non-degenerate}, or {\it multisymplectic}, if  $\rk \,  \varphi^k = n$.  Furthermore, it is  important   to study the topology of the
orbit $\GL(n, \R)\cdot \varphi ^k= \GL(n, \R)/ \St_{\GL(n, \R)}(\varphi^k)$,  for example,   the connectedness, see  Proposition \ref{prop:connected} below, the openness, the closure  of the  orbit $\GL(n, \R)\cdot \varphi^k \subset \Lambda^k \R^{n*}$.  It turns  out that
understanding these questions   helps us to understand  the structure  of the     orbit space of $\GL(n,\R)$-action  on $\Lambda ^k \R^{n*}$.  These invariants  of $k$-forms
 shall be highlighted  in our survey.

Let us outline  the plan  of our paper.
In the first  part of  Section  2 we make  several observations on  the  duality between   $\GL(n, \R)$-orbits  of $k$-forms  on $\R^n$ and  $\GL(n,\R)$-orbits of $k$-vectors  as well as
the   duality  between $\GL^+(n, \R)$-orbits  of $k$-forms on $\R^n$ and  $\GL^+(n,\R)$-orbits
of $(n-k)$-forms on $\R^n$. Then we recall the classification   of 2-forms  on $\R^n$ (Theorem \ref{thm:k=2}) and   present the Martinet's classification of $(n-2)$-forms on $\R^n$ (Theorem \ref{thm:k=n-2}).

In contrast to the   classification  of 2-forms on $\R^n$,  the classification  of 3-forms  on $\R^n$  depends
on the  dimension $n$. Since  $\dim \Lambda ^3  \R^{n*} \ge \dim \GL(n, \R) +1$, if $n \ge 9$,
there are infinite numbers of inequivalent  3-forms  in $\R^n$.
Till now  there is  no  classification   of the  $\GL(n, \R)$-action on $\Lambda^3 \R^{n*}$, if  $n \ge 10$.

In the  dimension   $n=9$
the  classification  of  the $\SL(9, \C)$-orbits on $\Lambda ^3 \C^9$  has been obtained by  Vinberg-Elashvili \cite{VE1978}. In the second part  of Section 2  we    survey  Vinberg-Elashvili's  result  and   some  further developments  by  
Le \cite{Le2011} and Dietrich-Facin-de  Graaf
\cite{DFG2015},
  which  give partial information  on $\GL(9,\R)$-orbits on $\Lambda^3\R^9$.
 Then we   review Djokovic'  classification of 3-vectors in $\R^8$  and present  a classification of 5-forms  on $\R^8$ (Corollary \ref{thm:58}).  Djokovic's    classification  method   combines  some
 ideas   from Vinberg-Elashvili's  work  and   Galois  cohomology  method   for  classifying real  forms of a  complex orbit.
Note that  the   classification of 3-vectors   in $\R^8$ implies  the classification of 3-forms  in $\R^8$ (Proposition \ref{prop:isofv})  as well as  the classifications of
3-forms in  $\R^n$ for $n \le 7$ (Theorem \ref{thm:red}, Remark \ref{rem:lower}).
 Then we   review     a classification  of $\GL(8,\C)$-action on $\Lambda ^4 \C^8$  by Antonyan  \cite{Antonyan1981}, which is   important for classification    of 4-forms on $\R^8$.
At the end of Section 2  we review  a   scheme  of classification of 4-forms  on  $\R^8$  proposed by L\^e in 2011  \cite{Le2011}  and   Dietrich-Facin-de  Graaf's method  of classification of  3-forms  on $\R^8$ in  \cite{DFG2015}.

In Section  3, for $ k =2,3,4$,    we    compile  known results and  discuss  some open problems
on   necessary and sufficient   topological   conditions  for  the existence  of  a differential
$k$-form $\varphi$ of given  type $\St_{\GL(n, \R)} (\varphi(x)) $  on   manifolds   $M^n$  (in these  cases    the equivalence  class  of $\varphi(x)$ is defined uniquely  by the type of   the stablizer  of $\varphi(x)$, i.e., the   conjugation  class  of  $\St_{\GL(n,\R)}(\varphi(x))$  in  $\GL(n,\R)$).  In  dimension   $n = 8$  (and hence also for $n = 6,7$)
we   observe that   the  stabilizer  $\St_{\GL(n, \R)}(\varphi)$  of a  3-form $\varphi\in \Lambda^3\R^{n*}$   forms a
complete      system of invariants   of the action  of $\GL(n, \R)$ on $\R^n$ (Remark \ref{rem:auto8}).

We include two appendices in  this paper.   The first appendix  contains  a   result due to H\^ong V\^an L\^e concerning  the existence  of
3-form of type  $\tilde  G_2$ on a smooth 7-manifold, which has  been    posted  in arxiv in  2007 \cite{Le2007}.  The second appendix   outlines   the   Galois cohomology   method for classification of real forms of   a complex orbit. This  appendix  is     taken from  a  private note   by  Mikhail     Borovoi  with his kind permission.

Finally  we  would like to emphasize that our  paper is not a bibliographical survey. Some important papers may have been
missed if they are not directly related to the main lines of our narrative.  We  also   don't     mention in this survey the relations  of   geometry  defined  by differential forms to  physics   and instead refer  the reader to \cite{Joyce2007}, \cite{FOS2016}, \cite{FOS2017}, \cite{Vaizman1983}.


\section{Classification of   $\GL(n, \R)$-orbits   of $k$-forms on $\R^n$}\label{sec:class}



\subsection{General theorems}\label{subs:general}
We begin   the classification  of $\GL(n, \R)$-orbits   on $\Lambda^k\R^{n*}$  with
the following  observation  that the orbit  of the standard action of  $\GL(n,\R)$ on $\Lambda ^k \R^n$ can be identified  with   the   orbit  of the standard  action of $\GL(n,\R)$ on $\Lambda ^k \R^{n*}$ by using
an isomorphism $\mu \in   Hom(\R^n,\R^{n*}) = \R^{n*} \otimes \R^{n*} \supset S^2\R^{n*}$.
Note that  there are several papers  and books devoted to the classification
of $k$-vectors  on $\R^n$ \cite[Chapter VII]{Gurevich1964} \footnote{under ``polyvectors" Gurevich meant  both covariant  and contravariant  polyvectors},  \cite{Djokovic1983}, \cite{VE1978}.  Hence we have  the following  well-known  fact,  see  e.g.,   \cite{NR1994},
\begin{proposition}\label{prop:isofv}
	There exists  a bijection  between the $\GL(n,\mathbb R)$-orbits in $\Lambda^k\mathbb R^n$ and
	$\GL(n,\mathbb R)$-orbits in $\Lambda^k\mathbb R^{n*}$.
\end{proposition}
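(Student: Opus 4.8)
The plan is to realize the bijection concretely and then observe that it respects orbits up to a harmless twist. First I would fix a linear isomorphism $\mu\colon\R^n\to\R^{n*}$ --- for definiteness the one given by the standard Euclidean scalar product, so that $\mu\in S^2\R^{n*}\subset\R^{n*}\otimes\R^{n*}=\mathrm{Hom}(\R^n,\R^{n*})$ --- and pass to $k$-th exterior powers to get a bijection of sets $\Lambda^k\mu\colon\Lambda^k\R^n\to\Lambda^k\R^{n*}$. It then suffices to check that $\Lambda^k\mu$ carries each $\GL(n,\R)$-orbit in $\Lambda^k\R^n$ onto a $\GL(n,\R)$-orbit in $\Lambda^k\R^{n*}$: granting this, $\Lambda^k\mu$ descends to a bijection of orbit spaces, with inverse induced by $\Lambda^k(\mu^{-1})$.

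The key point, and the only real subtlety, is that $\Lambda^k\mu$ is \emph{not} $\GL(n,\R)$-equivariant in the naive sense. The standard action on $\Lambda^k\R^n$ is $g\cdot\omega=(\Lambda^k g)\,\omega$, whereas the standard action on $\Lambda^k\R^{n*}$ is the contragredient one, $g\cdot\alpha=\Lambda^k\big((g^{-1})^{*}\big)\,\alpha$, where $(g^{-1})^{*}\colon\R^{n*}\to\R^{n*}$ denotes the dual of $g^{-1}$. I would compute, for $g\in\GL(n,\R)$ and $\omega\in\Lambda^k\R^n$,
\[
  \Lambda^k\mu\,(g\cdot\omega)=\Lambda^k(\mu\circ g)\,\omega=\Lambda^k\big(\mu\circ g\circ\mu^{-1}\big)\big(\Lambda^k\mu\,\omega\big),
\]
and note that $\mu\circ g\circ\mu^{-1}$ is an invertible endomorphism of $\R^{n*}$. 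Now the contragredient action identifies $\GL(n,\R)$ with $\GL(\R^{n*})$ as groups via the isomorphism $\rho\colon h\mapsto(h^{-1})^{*}$; hence there is a unique $\tau(g)\in\GL(n,\R)$ with $(\tau(g)^{-1})^{*}=\mu\circ g\circ\mu^{-1}$, namely $\tau=\rho^{-1}\circ c_\mu$, the composite of the conjugation isomorphism $c_\mu\colon\GL(n,\R)\to\GL(\R^{n*})$, $g\mapsto\mu\circ g\circ\mu^{-1}$, with $\rho^{-1}$. Thus $\tau$ is a group automorphism of $\GL(n,\R)$ (outer when $n\ge 2$; explicitly $\tau(g)=\mu^{-1}\circ(g^{-1})^{*}\circ\mu$ when $\mu$ is symmetric), and the computation above becomes the equivariance identity
\[
  \Lambda^k\mu\,(g\cdot\omega)=\tau(g)\cdot\big(\Lambda^k\mu\,\omega\big)\qquad\text{for all }g\in\GL(n,\R),\ \omega\in\Lambda^k\R^n.
\]

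Finally, since $\tau$ is a bijection of $\GL(n,\R)$ onto itself, this identity shows that $\Lambda^k\mu$ maps the orbit $\GL(n,\R)\cdot\omega$ exactly onto $\GL(n,\R)\cdot(\Lambda^k\mu\,\omega)$; the analogous statement for $\Lambda^k(\mu^{-1})$ (with $\tau^{-1}$) supplies the inverse assignment, so $\Lambda^k\mu$ induces a genuine bijection of orbit spaces, as asserted. I expect the one step needing care to be the recognition that the two ``standard actions'' differ by an automorphism of $\GL(n,\R)$; once that is granted the rest is formal bookkeeping. One can also avoid choosing $\mu$ altogether by combining the representation isomorphism $\Lambda^k\R^{n*}\cong(\Lambda^k\R^n)^{*}$ with the fact that the defining representation of $\GL(n,\R)$ is isomorphic to its dual after twisting by $g\mapsto(g^{-1})^{*}$.
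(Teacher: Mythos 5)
Your proof is correct and follows the same route the paper indicates: the paper states the proposition as a well-known fact, resting on the identification of $\Lambda^k\R^n$ with $\Lambda^k\R^{n*}$ via an isomorphism $\mu\in S^2\R^{n*}\subset\mathrm{Hom}(\R^n,\R^{n*})$ coming from a scalar product, exactly your $\Lambda^k\mu$. You merely make explicit the bookkeeping the paper leaves implicit, namely that $\Lambda^k\mu$ is equivariant up to the automorphism $g\mapsto\mu^{-1}\circ(g^{-1})^{*}\circ\mu$ of $\GL(n,\R)$, which is precisely why orbits go to orbits.
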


Next we  shall    compare    $\GL^+(n, \R)$-orbits on $\Lambda ^k\R^{n}$  with  $\GL^+(n, \R)$-orbits
 on $\Lambda^{n-k}\R^{n*}$.  We take a  volume  form $\Omega \in \Lambda ^n \R^{n*} \setminus \{ 0\}$ and define  the Poincar\'e isomorphism $P_\Om:\Lambda^k\mathbb R^n\rightarrow\Lambda^{n-k}\mathbb R^{n*}, \: \xi \mapsto i_\xi  \Om$.  Since
 $\GL^+(n,\R)$ is a  direct product   of its   center $Z(\GL^+(n, \R))=\R^+$ with  its semisimple
 subgroup $\SL(n, \R)$,  for any  $\lambda \in \R$ the  group $\GL^+(n, \R)$  admits  a $\lambda$-twisted      action  on $\Lambda ^k\R^{n*}$ defined  as follows: $g_{[\lambda]} (\varphi) : = (\det g)^\lambda \cdot g(\varphi)$  for $g \in \GL^+(n, \R)$, $\varphi \in \Lambda^k\R^{n*}$, where
 $g(\varphi)$ denotes the standard  action   of $g$ on $\varphi$.

 Denote  also by $\mu$    the   isomorphism $\Lambda ^k \R^n \to \Lambda^k \R^{n*}$ induced  from a
 scalar  product  $\mu$ on $\R^n$.

\begin{lemma}\label{lem:poincare} The composition  $P_\Om \circ \mu^{-1}: \Lambda  ^k \R^{n*} \to \Lambda  ^{n-k} \R^{n*}$ is a  $\GL^+(n, \R)$-equivariant  map   where
	$\GL^+(n,\R)$ acts  on  $\Lambda ^k \R^{n*}$ by the  standard action  and  on $\Lambda ^{n-k}\R^{n *}$ by   the $(-1)$-twisted action.
\end{lemma}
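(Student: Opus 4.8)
The plan is to isolate the two elementary facts underlying the statement, verify them, and then compose. The first is the naturality of contraction under linear automorphisms: for every $g\in\GL(n,\R)$, every $\xi\in\Lambda^k\R^n$ and every $\omega\in\Lambda^m\R^{n*}$ one has $i_{g\xi}(g\omega)=g(i_\xi\omega)$, where $g$ acts on each exterior power by the functorial extension of the standard action on $\R^n$ and of its contragredient on $\R^{n*}$. This reduces, via $i_{v_1\wedge\dots\wedge v_k}=i_{v_1}\circ\dots\circ i_{v_k}$, to the one-vector identity $i_{gv}(g\omega)=g(i_v\omega)$, which is just the $\GL(n,\R)$-invariance of the evaluation pairing $\R^n\otimes\R^{n*}\to\R$. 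The second fact is that $\GL(n,\R)$ acts on the line $\Lambda^n\R^{n*}$ through the determinant character: with the conventions in force, $g\cdot\Om=(\det g)^{-1}\Om$, equivalently $\Om=(\det g)\,(g\cdot\Om)$.

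Together these already give the transformation law of the Poincar\'e isomorphism: for $g\in\GL^+(n,\R)$ and $\xi\in\Lambda^k\R^n$,
\[
P_\Om(g\xi)=i_{g\xi}\Om=(\det g)\,i_{g\xi}(g\cdot\Om)=(\det g)\,g\cdot(i_\xi\Om)=(\det g)\,\bigl(g\cdot P_\Om(\xi)\bigr),
\]
so $P_\Om$ intertwines the standard action on $\Lambda^k\R^n$ with a determinant-twisted action on $\Lambda^{n-k}\R^{n*}$.

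The one step that is not formal is the composition with $\mu^{-1}$, and this is where I expect the real work to lie, since $P_\Om\circ\mu^{-1}$ is in essence a Hodge star and a scalar product is not $\GL(n,\R)$-invariant, so $\mu^{-1}\colon\Lambda^k\R^{n*}\to\Lambda^k\R^n$ is not equivariant for the bare standard actions. What I would check is that $\mu^{-1}$ intertwines the standard action of $g$ on $\Lambda^k\R^{n*}$ with the standard action of its inverse transpose $g^{-T}$ (taken with respect to $\mu$) on $\Lambda^k\R^n$; equivalently, that under the identification $\R^{n*}\cong\R^n$ furnished by $\mu$ the standard action on $\Lambda^k\R^{n*}$ goes over to the standard action on $\Lambda^k\R^n$. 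Applying the displayed law of $P_\Om$ to the element $g^{-T}$ (whose determinant is $(\det g)^{-1}$) and collecting the determinant powers then yields the asserted equivariance of $P_\Om\circ\mu^{-1}$, with the relevant action on $\Lambda^{n-k}\R^{n*}$ the $(-1)$-twisted one — note that since $g\mapsto g^{-T}$ is an automorphism of $\GL^+(n,\R)$, on $\Lambda^{n-k}\R^{n*}$ the standard $g^{-T}$-action is just the standard $g$-action transported through $\mu$, so the two descriptions of the target action coincide. The genuine obstacle is thus the bookkeeping: keeping contragredients consistent between $\R^n$ and $\R^{n*}$ and confirming the accumulated determinant exponent is exactly $-1$; I would settle this in a $\mu$-orthonormal basis, where $\mu$ is the identity and $P_\Om$ is a matrix of signs, checking the identity on basis elements.
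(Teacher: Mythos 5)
Your two preliminary facts and the displayed law $P_\Om(g\xi)=(\det g)\,g\cdot P_\Om(\xi)$ are correct, and you correctly locate the delicate point in the composition with $\mu^{-1}$; the gap is in how you close it. Under $\mu$ the standard action of $g$ on $\Lambda^k\R^{n*}$ does \emph{not} go over to the standard action of $g$ on $\Lambda^k\R^n$ — as you yourself say in the first half of that sentence, it goes over to the action of $g^{-T}$ (transpose with respect to $\mu$), so your ``equivalently'' is not an equivalence. Feeding $g^{-T}$ into the law of $P_\Om$ gives
\begin{equation*}
P_\Om\circ\mu^{-1}(g\cdot\varphi)\;=\;(\det g)^{-1}\,\bigl(g^{-T}\bigr)\cdot\bigl(P_\Om\circ\mu^{-1}(\varphi)\bigr),
\end{equation*}
and the closing claim that ``the standard $g^{-T}$-action is just the standard $g$-action transported through $\mu$, so the two descriptions of the target action coincide'' is false: transport through $\mu$ relates the $g^{-T}$-action on $\Lambda^{n-k}\R^{n*}$ to the $g$-action on $\Lambda^{n-k}\R^{n}$, a different space, not to the $g$-action on $\Lambda^{n-k}\R^{n*}$; the two target actions differ by the nontrivial automorphism $g\mapsto g^{-T}$. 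No basis choice repairs this, since the failure is in the $g$-dependence, not in linearity in $\varphi$: already for $n=2$, $k=1$, $\mu$ the standard scalar product, $\Om=\alpha_1\wedge\alpha_2$, the map $P_\Om\circ\mu^{-1}$ sends $\alpha_1\mapsto\alpha_2$, $\alpha_2\mapsto-\alpha_1$, and for the unipotent $g$ with $ge_1=e_1$, $ge_2=e_1+e_2$ (so $\det g=1$) it does not commute with the standard action of $g$ on $\R^{2*}$, although it does intertwine $g$ with $g^{-T}$.

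You should know that the paper's own one-line proof makes exactly the same leap: it writes $\mu^{-1}(g^*\varphi)=g^{-1}\circ\mu^{-1}(\varphi)$, which holds only when $g$ is orthogonal with respect to $\mu$. So the literal statement of Lemma \ref{lem:poincare} (the same $g$ on both sides) is too strong, and your instinct that this is ``where the real work lies'' is sound; what your computation genuinely proves is the displayed twisted equivariance, i.e.\ equivariance after precomposing the target action with the automorphism $g\mapsto g^{-T}$ of $\GL^+(n,\R)$, with the determinant weight indeed equal to $-1$. That weaker statement is all that is needed downstream: since $g\mapsto g^{-T}$ maps $\GL^+(n,\R)$ onto itself, the orbits of the $g^{-T}$-twisted action coincide with those of the $(-1)$-twisted action, so $P_\Om\circ\mu^{-1}$ still induces the bijection of $\GL^+(n,\R)$-orbit sets preserving openness that Proposition \ref{prop:connected}(1) requires. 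A clean fix is therefore either to restate the lemma with the target action twisted by $g\mapsto g^{-T}$, or to drop $\mu$ altogether and use only the honest equivariance of $P_\Om$ on $\Lambda^k\R^n$, invoking the orbit bijection of Proposition \ref{prop:isofv} separately.
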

\begin{proof}   Let $\varphi  = \mu (X)\in \Lambda ^k \R^{n*}$  and  $g \in \GL^+(n, \R)$.  Then
$$ P_\Om \circ \mu^{-1} (g^* \varphi) = P_\Om (g^{-1}\circ \mu^{-1} (\varphi) )= i_{g^{-1} \mu^{-1} (\varphi)} \Om$$
$$ =  (\det g) ^{-1}\cdot   g(i_{\mu^{-1}(\varphi)}\Om) = g_{[-1]}(P_\Om \circ \mu ^{-1} (\varphi)),$$
which proves the first  assertion of Lemma \ref{lem:poincare}.
\end{proof}

\begin{proposition}\label{prop:connected} (1) There is a  1-1  correspondence  between $\GL^+(n, \R)$-orbits of  $k$-forms  on $\R^n$ and  $\GL^+(n, \R)$-orbits  of $(n-k)$-forms  on $\R^n$. This
correspondence  preserves the openness  of $\GL^+(n, \R)$-orbits (and hence the openness of $\GL(n,\R)$-orbits).
	
(2)  The $\GL(n, \R)$-orbit  of  $\varphi ^k \in \Lambda^k \R^{n*}$ has two  connected components
if and only if $\St_{\GL(n,\R)}(\varphi^k)\subset  \GL^+(n, \R)$. In other cases  the $\GL(n, \R)$-orbit  of
$\varphi^k$  is connected.

(3)  Assume  that   $\varphi^k\in \Lambda ^k\R^{n*}$ is   degenerate. Then
the $\GL(n, \R)$-orbit  of $\varphi^k$ is  connected.
\end{proposition}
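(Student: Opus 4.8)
The plan is to dispatch the three parts in order, using Lemma~\ref{lem:poincare} for~(1) and then the elementary fact that $\GL(n,\R)$ has two connected components, the identity component being $\GL^+(n,\R)$, for~(2) and~(3).

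For part~(1) (understood for $1\le k\le n-1$), I would first record that for the connected group $\GL^+(n,\R)$ the $\lambda$-twisted action has exactly the same orbits as the standard one: $g_{[\lambda]}(\varphi)=(\det g)^\lambda g(\varphi)$ is a positive multiple of $g(\varphi)$, and conversely every positive multiple of $\psi\in\Lambda^m\R^{n*}$ with $1\le m\le n$ already lies in the standard $\GL^+(n,\R)$-orbit of $\psi$, since the homothety $tI$ ($t>0$) sends $\psi$ to $t^{-m}\psi$ and $t\mapsto t^{-m}$ surjects onto $\R^+$. Hence the linear isomorphism $P_\Om\circ\mu^{-1}\colon\Lambda^k\R^{n*}\to\Lambda^{n-k}\R^{n*}$ of Lemma~\ref{lem:poincare}, being $\GL^+(n,\R)$-equivariant for the standard action on the source and the $(-1)$-twisted action on the target, induces a bijection between $\GL^+(n,\R)$-orbits of $k$-forms and $\GL^+(n,\R)$-orbits of $(n-k)$-forms. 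Being a homeomorphism, it sends open orbits to open orbits; and since a $\GL(n,\R)$-orbit is open precisely when it is full-dimensional and the $\GL(n,\R)$- and $\GL^+(n,\R)$-orbits through a point have the same dimension (the stabilizers differing in dimension by $0$), openness is preserved for $\GL(n,\R)$-orbits as well.

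For part~(2), write $\GL(n,\R)=\GL^+(n,\R)\sqcup\GL^-(n,\R)$, where $\GL^-(n,\R)=g_0\,\GL^+(n,\R)$ for any fixed $g_0$ with $\det g_0<0$. Then $\GL(n,\R)\cdot\varphi^k$ is the union of the two path-connected sets $\GL^+(n,\R)\cdot\varphi^k$ and $\GL^-(n,\R)\cdot\varphi^k$, each of which is open in the orbit (the orbit map of this algebraic action is open onto the embedded orbit and $\GL^\pm(n,\R)$ are open in $\GL(n,\R)$). Therefore the orbit has at most two connected components, and it is connected exactly when these two sets meet. They meet iff $g_+\varphi^k=g_-\varphi^k$ for some $g_\pm\in\GL^\pm(n,\R)$, i.e.\ iff $g_-^{-1}g_+\in\St_{\GL(n,\R)}(\varphi^k)$ is an element of negative determinant; equivalently, the orbit has two components iff $\St_{\GL(n,\R)}(\varphi^k)\subset\GL^+(n,\R)$, which is the claim.

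Part~(3) then follows from~(2): it suffices to exhibit an element of negative determinant in $\St_{\GL(n,\R)}(\varphi^k)$. Since $\varphi^k$ is degenerate, $\ker L_{\varphi^k}\neq 0$; pick $0\neq v\in\ker L_{\varphi^k}$ and a basis $e_1=v,e_2,\dots,e_n$ of $\R^n$. Writing $\varphi^k=e^1\wedge\alpha+\beta$ in the dual basis, the condition $i_v\varphi^k=0$ forces $\alpha=0$, so $\varphi^k\in\Lambda^k\,\spann(e^2,\dots,e^n)$. The linear map $g$ with $ge_1=-e_1$ and $ge_i=e_i$ for $i\ge 2$ has $\det g=-1$ and pulls $e^1$ back to $-e^1$ and each $e^j$ ($j\ge 2$) to itself, hence fixes $\varphi^k$; thus $\St_{\GL(n,\R)}(\varphi^k)\not\subset\GL^+(n,\R)$ and the orbit is connected. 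The only point that calls for genuine care is the topological bookkeeping — that orbits of the algebraic $\GL(n,\R)$-action on $\Lambda^k\R^{n*}$ are embedded submanifolds on which the orbit map is open — which underlies both the openness statement in~(1) and the ``at most two components'' step in~(2); granting this standard fact, all three parts reduce to the short verifications above and I foresee no real obstacle.
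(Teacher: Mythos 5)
Your proof is correct and follows essentially the same route as the paper: part (1) as a consequence of Lemma \ref{lem:poincare}, part (2) from the decomposition of $\GL(n,\R)$ into its two components, and part (3) by producing a determinant-$(-1)$ element of $\St_{\GL(n,\R)}(\varphi^k)$ from $\ker L_{\varphi^k}$ (the paper takes the whole subgroup $\GL(W)\oplus \mathrm{Id}_{W^\perp}$, you take a single reflection) and then invoking part (2). The details you add --- that the $(-1)$-twisted and standard $\GL^+(n,\R)$-orbits coincide, and the openness of $\GL^{\pm}(n,\R)\cdot\varphi^k$ inside the orbit --- are points the paper leaves implicit, so there is no substantive difference in approach.
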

\begin{proof}  1.  The first assertion of Proposition  \ref{prop:connected}  is a consequence of Lemma
	\ref{lem:poincare}. 
	
	2. The second assertion of Proposition \ref{prop:connected} follows  from the fact that $\GL(n, \R)$  has  two connected  components.
	
	3.   Assume that $\varphi$ is degenerate. Then    $W: = \ker  L_{\varphi}$ is non-empty.  Let $W^\perp$ be  any  complement   to $W$  in $\R^n$ i.e., $\R^n = W \oplus W^\perp$.
	 Then $\GL(W)\oplus Id_{W^\perp}$  is a subgroup    of $St(\varphi)$. Since   this subgroup
	 has non-trivial intersection with  $\GL^-(n, \R)$,   this implies
	the last assertion  of Proposition \ref{prop:connected}  follows  from the second one. This completes the  proof   of Proposition \ref{prop:connected}.
	\end{proof}

The following  theorem due to Vinberg-Elashvili reduces  a classification  of (degenerate) $k$-forms   of  rank $r$ in $\R^n$  to
a classification  of  $k$-forms  on $\R^r$.  (Vinberg-Elashvili  considered  only the case  $k=3$  and the $\SL(n, \C)$-action on $\Lambda^3\C^n$ but their argument  works  for any $k$   and for $\GL(n, \R)$-action on $\Lambda ^k  \R^{n*}$.)

\begin{theorem}\label{thm:red} (cf. \cite[\S  4.4]{VE1978}, \cite[Lemma 3.2]{Ryvkin2016})  There is a 1-1  correspondence between  $\GL(n, \R)$-orbits
	of  $k$-forms  of   rank  less or equal  to $r$ on $\R^n$ and  $\GL(r, \R)$-orbits  of
	$k$-forms  on $\R^r$.
\end{theorem}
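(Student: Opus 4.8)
The plan is to make explicit the correspondence: a $k$-form $\varphi$ of rank $\le r$ on $\R^n$ has kernel $W = \ker L_\varphi$ of dimension $\ge n-r$, so fixing any complement $V$ with $\R^n = W \oplus V$ (and $\dim V \le r$) the form $\varphi$ is pulled back from a form $\bar\varphi$ on $V$ under the projection $\R^n \to V$. After enlarging $V$ if necessary to a subspace $V'$ of dimension exactly $r$ (possible since $\dim W \ge n-r$, so we may absorb a complementary piece of $W$ into $V$; the new form on $V'$ still has the same kernel within $V'$), we identify $V' \cong \R^r$ and obtain a $k$-form on $\R^r$. This defines the map from $\GL(n,\R)$-orbits of $k$-forms of rank $\le r$ to $\GL(r,\R)$-orbits of $k$-forms on $\R^r$; conversely, any $k$-form $\psi$ on $\R^r$ pulls back under $\R^n = \R^r \oplus \R^{n-r} \to \R^r$ to a $k$-form of rank $\le r$ on $\R^n$.

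First I would check that the map on orbits is well defined: if $\varphi_1 = g^*\varphi_2$ for $g \in \GL(n,\R)$, then $g$ carries $\ker L_{\varphi_1}$ to $\ker L_{\varphi_2}$, and since the restricted forms $\bar\varphi_i$ on the chosen complements are each pulled back from the respective quotients $\R^n/\ker L_{\varphi_i}$, the element $g$ induces an isomorphism of these quotients intertwining $\bar\varphi_1$ and $\bar\varphi_2$; transporting through the identifications $\R^n/\ker L_{\varphi_i} \cong \R^r$ gives an element of $\GL(r,\R)$ relating the two $r$-dimensional models. I would phrase this intrinsically via the quotient $\R^n / \ker L_\varphi$, on which $\varphi$ descends to a multisymplectic (rank-$(n-\dim W)$) form, then pad with a trivial summand to reach dimension $r$; this avoids depending on the choice of complement. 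Second, I would check injectivity: two $k$-forms on $\R^n$ of rank $\le r$ whose induced forms on $\R^r$ are $\GL(r,\R)$-equivalent are themselves $\GL(n,\R)$-equivalent, by choosing bases adapted to the kernel decompositions and extending the $\GL(r,\R)$-equivalence by the identity (or any isomorphism) on the kernel complement — here one uses that all complements to a fixed subspace are related by an element of $\GL(n,\R)$ fixing that subspace. Surjectivity is immediate from the pullback construction above.

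The main obstacle — really the only subtle point — is the bookkeeping when $\dim \ker L_\varphi$ is strictly larger than $n-r$, i.e.\ when $\rk\,\varphi < r$ strictly: one must split off exactly an $(n-r)$-dimensional piece of the kernel and retain the rest inside the $\R^r$-model, and check that the resulting $k$-form on $\R^r$ (which is degenerate, of rank $\rk\,\varphi$) is well defined up to $\GL(r,\R)$ independently of how the kernel was split. This follows because $\GL(r,\R)$ acts transitively on the set of $(\dim\ker L_\varphi - (n-r))$-dimensional subspaces of the kernel-part of $\R^r$, but it is the step that needs to be spelled out carefully. Once that is settled, combining the well-definedness, injectivity, and surjectivity statements yields the claimed bijection. \QED
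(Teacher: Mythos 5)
Your proof is correct: the kernel/quotient reduction (a $k$-form of rank at most $r$ is pulled back from an $r$-dimensional model, together with the padding argument when the rank is strictly less than $r$, plus the well-definedness, injectivity and surjectivity checks) is exactly the standard argument behind this statement. Note that the paper does not actually prove Theorem \ref{thm:red} but only cites \cite[\S 4.4]{VE1978} and \cite[Lemma 3.2]{Ryvkin2016}, whose arguments follow the same route as yours, so your write-up simply supplies the details left to the literature; the one point worth tightening is the appeal to transitivity of $\GL(r,\R)$ on subspaces of the kernel part, which is handled more cleanly by your own intrinsic formulation (descend to $\R^n/\ker L_{\varphi}$ and pad by a trivial factor, well defined up to $\GL(r,\R)$).
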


\subsection{Classification of $2$-forms  and $(n-2)$-forms on $\R^n$}\label{subs:gl}

From Proposition \ref{prop:connected} we   obtain immediately the following known theorem  \cite{Dieudonne1955}, cf. \cite[Theorem  34.9]{Gurevich1964}.

\begin{theorem}\label{thm:k=2}  (1) The   rank  of  a  2-form $\varphi \in \Lambda^2\R^{n*}$  is  a  complete	invariant    of  the  standard $\GL(n, \R)$-action on $\Lambda ^2 \R^{n*}$.  Hence $\Lambda ^2 \R^{n*}$   decomposes into $[n/2]+1$ $\GL(n,\R)$-orbits.
	
(2) The $\GL(n, \R)$-orbit of a 2-form $\varphi \in \Lambda ^2 \R^{n*}$  has   two  connected components  if and only if $n=2k$ and $\varphi$ has maximal rank.

(3)  If $\varphi$ is of maximal rank, then the $\GL(n,\R)$-orbit of  $\varphi$  is open   and its   closure  contains the   $\GL(n, \R)$-orbit  of   any degenerate  2-form on $\R^n$.
\end{theorem}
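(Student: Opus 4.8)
The plan is to deduce (1) from the classical structure theory of alternating bilinear forms and then read off (2) and (3) from Proposition~\ref{prop:connected} together with one elementary openness argument. For (1), note first that $\rk\varphi=\dim L_\varphi(\R^n)$ is a $\GL(n,\R)$-invariant (it is the rank of the intrinsically defined operator $L_\varphi$) and that it is always even, since the Gram matrix of $\varphi$ in any basis is skew-symmetric. To show that it is a \emph{complete} invariant I would prove, by induction on $n$, that every 2-form of rank $2m$ is $\GL(n,\R)$-equivalent to $\sum_{i=1}^{m}e^{2i-1}\wedge e^{2i}$: if $\varphi=0$ there is nothing to prove; otherwise choose $v_1,v_2\in\R^n$ with $\varphi(v_1,v_2)=1$, observe that $P:=\R v_1\oplus\R v_2$ is a nondegenerate plane and that $\R^n=P\oplus V'$ where $V':=\{w\in\R^n:\varphi(v_1,w)=\varphi(v_2,w)=0\}$, check that $\varphi|_{V'}$ has rank $2m-2$, and apply the inductive hypothesis on $V'$. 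Hence two 2-forms are equivalent if and only if they have the same rank, and as the admissible ranks are $0,2,\dots,2[n/2]$ one gets exactly $[n/2]+1$ orbits.

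For (2), I would identify the stabilizer of the standard form $\om_0=\sum_{i=1}^{k}e^{2i-1}\wedge e^{2i}$ on $\R^{2k}$ as ${\rm Sp}(2k,\R)$; since every $g\in{\rm Sp}(2k,\R)$ preserves the volume form $\om_0^{k}$ we have $\det g=1$, so ${\rm Sp}(2k,\R)\subset\SL(2k,\R)\subset\GL^+(2k,\R)$, and the same holds for all conjugates. By Proposition~\ref{prop:connected}(2) the $\GL(n,\R)$-orbit of a maximal-rank 2-form on $\R^{2k}$ therefore has exactly two connected components. In the remaining cases — $n$ odd, or $n=2k$ with $\rk\varphi<2k$ — the form $\varphi$ is degenerate, so its orbit is connected by Proposition~\ref{prop:connected}(3) (equivalently, $\St_{\GL(n,\R)}(\varphi)\not\subset\GL^+(n,\R)$).

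For (3), if $n=2k$ and $\rk\varphi=2k$ then by (1) the orbit of $\varphi$ coincides with $\{\psi\in\Lambda^2\R^{n*}:\psi^{k}\neq0\}$, the complement of the zero locus of the Pfaffian, which is open. To get the closure statement, take any degenerate 2-form $\psi$, bring it to the form $\sum_{i=1}^{r}e^{2i-1}\wedge e^{2i}$ with $r<k$, and set $\psi_t:=\sum_{i=1}^{r}e^{2i-1}\wedge e^{2i}+t\sum_{i=r+1}^{k}e^{2i-1}\wedge e^{2i}$; for $t\neq0$ each $\psi_t$ lies in the open maximal-rank orbit and $\psi_t\to\psi$ as $t\to0$, so $\psi\in\overline{\GL(n,\R)\cdot\varphi}$. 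Since the closure of a $\GL(n,\R)$-invariant set is again invariant, the whole orbit of $\psi$ lies in this closure.

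The only step with genuine content is the inductive normal-form argument in (1) — in particular the splitting $\R^n=P\oplus V'$ and the verification that $\varphi|_{V'}$ drops rank by exactly $2$; once this \emph{symplectic Gram--Schmidt} is available, part (2) is immediate from Proposition~\ref{prop:connected} after identifying the stabilizer, and part (3) reduces to the elementary openness-and-degeneration argument above.
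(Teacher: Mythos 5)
Your argument is correct and matches the route the paper intends: the paper states this theorem without proof, citing Dieudonn\'e and Gurevich for the classical rank normal form and deriving the connectedness statements ``immediately'' from Proposition~\ref{prop:connected}, which is exactly how you obtain (2) (via $\St_{\GL(2k,\R)}(\om_0)={\rm Sp}(2k,\R)\subset\SL(2k,\R)\subset\GL^+(2k,\R)$ together with parts (2) and (3) of that proposition). Your symplectic Gram--Schmidt induction and the Pfaffian openness/degeneration argument simply supply the classical details the paper delegates to the references; the only cosmetic remark is that in (3) you write the argument for $n=2k$, whereas for odd $n$ the identical reasoning with $[n/2]$ in place of $k$ (maximal rank $=2[n/2]$, openness from $\psi^{[n/2]}\neq 0$) covers the remaining case.
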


The classification  of $(n-2)$-forms  on $\R^n$   has been  done by Martinet \cite{Martinet1970}.
Martinet  used the  inverse Poincar\'e isomorphism  $P_\Om^{-1}: \Lambda ^{n-2} \R^{n*} \to \Lambda ^2 \R^n$  to define  {\it the length of $\varphi \in \Lambda ^{n-2}\R^n$}, denoted by $l(\varphi)$, to be the  half  of the rank  of  the bi-vector $P_\Om ^{-1}(\varphi)$ \footnote{the rank of a $k$-vector is defined   similarly as the rank of a $k$-form.}.  By Proposition  \ref{prop:connected} and  Theorem \ref{thm:k=2}
the map $P_\Om^{-1}$  induces  an  isomorphism  between the  $\GL(n,\R)$-orbits of  degenerate  $(n-2)$-forms
$\varphi$ on $\R^n$ and
degenerate bivectors $P_\Om^{-1} (\varphi)$ on $\R^n$.

$\bullet$  If  $2l(\varphi)< n$ then $\varphi$ has the following canonical form
\begin{equation}\label{eq:martinet}
\varphi= \sum_{i=1}^{l(\varphi)} \alpha_1 \wedge \cdots \alpha_{2i-2}\wedge \alpha _{2i+1}\wedge \cdots \wedge \alpha_n.
\end{equation}
By Theorem \ref{thm:k=2} (2) the orbit $\GL(n,\R)\cdot P_\Om^{-1}( \varphi)$ is  connected, and hence
by Proposition \ref{prop:connected}  the  orbit $\GL(n,\R)\cdot \varphi$ is connected.

$\bullet$ If  $2l(\varphi) = n$,  and $l(\varphi)$  is  odd,  then using Lemma \ref{lem:poincare} and Theorem \ref{thm:k=2}(2)  we conclude that  the   set
of  $(n-2)$-forms of length $l$   consists  of  two open connected $\GL(n, \R)$-orbits   that  correspond to the   sign of $\lambda = \lambda_\Om(\varphi)$ where
$$P_\Om ^{-1}(\varphi )= e_1 \wedge e_2 + \cdots + e_{2k-1}\wedge  e_{2k},$$
$$\Om= \lambda \, \alpha _1 \wedge \cdots \wedge \alpha_n,$$
\begin{equation}
\label{eq:sign}
\varphi = \lambda \,\sum_{i=1}^{l(\varphi)} \alpha_1 \wedge \cdots \alpha_{2i-2}\wedge \alpha _{2i+1}\wedge \cdots \wedge \alpha_n  \text{  and } \lambda = \pm 1.
\end{equation}

$\bullet$ If  $2l(\varphi) = n$ and  $l(\varphi) $ is  even, using the same argument  as  in the previous case, we conclude that   the   set
of  (n-2)-forms of length $l$    consists  of  one  open
$\GL(n, \R)$-orbit, which has two connected  components.

To summarize   Martinet's  result, we assign  the sign  $s_\Om(\varphi)$  of a (n-2)-form $\varphi\in
\Lambda^{n-2}\R^n$  to be  the number  $\lambda_\Om(\varphi) ^{l(\varphi)}$ if  $2l(\varphi) =n$,
and to be  1, if $2l(\varphi) < n$.

\begin{theorem}\label{thm:k=n-2} (cf. \cite[\S 5]{Martinet1970})  (1)  The  length $l(\varphi)$  and the sign $s_\Om(\varphi)$  of  a  (n-2)-form $\varphi\in \Lambda^{n-2}\R^{n*}$   form a   complete  system  of    invariants of  the  standard $\GL(n, \R)$-action on $\Lambda ^{n-2} \R^{n*}$.
	
	(2) The $\GL(n, \R)$-orbit of a (n-2)-form $\varphi \in \Lambda ^2 \R^{n*}$  has   two  connected components  if and only if $n=2k$, $l(\varphi) = n/2$ and $l$ is even.
\end{theorem}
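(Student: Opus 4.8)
The plan is to transport the problem, via the Poincar\'e isomorphism, to the classification of $2$-forms on $\R^n$ supplied by Theorem \ref{thm:k=2}; the three displayed cases preceding the statement give the skeleton. Fix the volume form $\Om$, put $\xi:=P_\Om^{-1}(\varphi)\in\Lambda^2\R^n$ for $\varphi\in\Lambda^{n-2}\R^{n*}$, so $2l(\varphi)=\rk\,\xi$, and record the identity $g\cdot(i_\eta\Om)=(\det g)^{-1}\,i_{g\cdot\eta}\Om$, valid for all $g\in\GL(n,\R)$: it extends Lemma \ref{lem:poincare} to $\GL(n,\R)$, the only twist being by the integer power $(\det g)^{\pm1}$. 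Consequently $\rk\,\xi$, hence $l(\varphi)$, is constant on $\GL(n,\R)$-orbits. It remains to decide when forms with equal $(l,s_\Om)$ are $\GL(n,\R)$-equivalent and to count connected components; I would split into two cases.

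\emph{Case $2l(\varphi)<n$.} Then $\xi$ is a degenerate bivector; by Theorem \ref{thm:k=2}(1) (carried over by $P_\Om^{-1}$ with Propositions \ref{prop:isofv} and \ref{prop:connected}(1)) its $\GL(n,\R)$-orbit depends only on $\rk\,\xi=2l(\varphi)$, giving one $\GL(n,\R)$-orbit of $(n-2)$-forms for each $l<n/2$, with normal form \eqref{eq:martinet}. Since $n-2l(\varphi)\ge 1$, the stabiliser of that normal form contains a diagonal element of determinant $-1$ (spread a sign over one symplectic pair and one unused coordinate), so by Proposition \ref{prop:connected}(2) the orbit is connected; here $s_\Om\equiv 1$.

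\emph{Case $2l(\varphi)=n$.} Now $\xi$ is symplectic. As $n>2$, a homothety in $\GL^+(n,\R)$ rescales $\varphi$ so that, in a basis with $P_\Om^{-1}(\varphi)=\sum_i e_{2i-1}\wedge e_{2i}$, one has $\Om=\lambda\,\alpha_1\wedge\cdots\wedge\alpha_n$ with $\lambda=\lambda_\Om(\varphi)=\pm1$ and $\varphi$ equal to the normal form \eqref{eq:sign}; write $\varphi_0$ for the case $\lambda=+1$. By Proposition \ref{prop:connected}(1) and Theorem \ref{thm:k=2}(2), the maximal-length $(n-2)$-forms --- like the maximal-rank $2$-forms --- form exactly two $\GL^+(n,\R)$-orbits. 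The decisive point, which I expect to be the main obstacle, is whether these two orbits constitute one or two $\GL(n,\R)$-orbits, and this the Poincar\'e correspondence does \emph{not} settle, because orientation reversal interacts with the determinant twist in Lemma \ref{lem:poincare}.

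I would resolve it by computing the stabiliser directly: with $\xi=P_\Om^{-1}(\varphi_0)$ the standard symplectic bivector, $g\in\St_{\GL(n,\R)}(\varphi_0)$ iff $g\cdot\xi=(\det g)\,\xi$, that is, $g$ is conformally symplectic with conformal factor $(\det g)^{-1}$; combined with the relation $\det g=c^{-n/2}$ between the determinant and the conformal factor $c$, this forces $(\det g)^{n/2-1}=1$. Thus $\St_{\GL(n,\R)}(\varphi_0)=\mathrm{Sp}(n,\R)\subset\GL^+(n,\R)$ when $n/2$ is even, so by Proposition \ref{prop:connected}(2) the $\GL(n,\R)$-orbit of $\varphi_0$ is disconnected and therefore equals the union of the two $\GL^+(n,\R)$-orbits; here $s_\Om=\lambda_\Om^{\,l}\equiv 1$. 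When $n/2$ is odd the stabiliser also contains determinant $-1$ elements (those $g$ with $g\cdot\xi=-\xi$), so the $\GL(n,\R)$-orbit of $\varphi_0$ is connected, and the same computation applied to the equation $g\cdot\varphi_0=-\varphi_0$ gives $(\det g)^{n/2-1}=(-1)^{n/2}$, which has no real solution; hence $\GL(n,\R)\cdot\varphi_0$ and $\GL(n,\R)\cdot(-\varphi_0)$ are the two distinct orbits, separated by $s_\Om=\lambda_\Om^{\,l}\in\{\pm1\}$. Collecting the cases proves that $(l,s_\Om)$ is a complete system of invariants (part (1)) and that an orbit has two connected components exactly when $2l(\varphi)=n$ and $l$ is even (part (2)).
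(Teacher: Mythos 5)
Your argument is correct and follows essentially the same route as the paper: transport by the Poincar\'e map $P_\Om$ (Lemma \ref{lem:poincare}), invoke Theorem \ref{thm:k=2} through Proposition \ref{prop:connected}, and settle the maximal-length case by the determinant--parity computation, which you make explicit via the stabilizer identity $g\cdot\xi=(\det g)\,\xi$ and $(\det g)^{n/2-1}=1$ --- precisely the step the paper compresses into ``the same argument as in the previous case''. One small repair in the case $2l(\varphi)<n$: the diagonal element you indicate (signs on one symplectic pair and one unused coordinate) has $\det g=-1$ but sends the normal form (\ref{eq:martinet}) to its negative rather than fixing it, because a diagonal $g=\mathrm{diag}(\eps_1,\dots,\eps_n)$, $\eps_j=\pm 1$, multiplies the $i$-th summand by $(\det g)\,\eps_{2i-1}\eps_{2i}$, so membership in the stabilizer with $\det g=-1$ forces $\eps_{2i-1}\eps_{2i}=-1$ for \emph{every} $i\le l(\varphi)$; flipping one vector in each of the $l(\varphi)$ symplectic pairs and, if needed, one unused vector to arrange $\det g=-1$ (possible since $n-2l(\varphi)\ge 1$) gives a genuine orientation-reversing stabilizer element, after which Proposition \ref{prop:connected}(2) yields connectedness as you intended.
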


\subsection{Classification of $3$-forms  and $6$-forms  on $\R^9$}\label{subs:3form}

We observe that  the  vector  space $\Lambda ^k \R^{n*}$  is a real  form  of  the complex vector space $\Lambda ^k\C^{n*}$. Hence,   for any $\varphi\in \Lambda^k \R^{n*}$
the  orbit  $\GL(n, \R)\cdot \varphi$ lies in the   orbit  $\GL(n, \C)\cdot \varphi$. We  shall say that  $\GL(n, \R)\cdot \varphi$ is   a {\it real form  of the complex orbit}  $\GL(n, \C)\cdot \varphi$.  It is known that every  complex orbit has only  finitely many real forms \cite[Proposition 2.3]{BC1962}.   Thus, the problem of classifying  of the  $\GL(n, \R)$-orbits
in $\Lambda^k\R^n$  can be  reduced to  the problem of classifying   the real forms of the
$\GL(n,\C)$-orbits  on $\Lambda ^k \C^n$. The classification of  $\GL(n, \C)$-orbits  on $\Lambda^3\C^n$ is trivial, if  $n \le 5$, cf. Proposition \ref{prop:connected}.
For  $n =6$ it was  solved by  W. Reichel  \cite{Reichel1907};  for $n=7$ it was  solved by J. A. Schouten \cite{Schouten1931}; for $n =8$ it was solved by Gurevich in 1935, see  also \cite{Gurevich1964}; and for $n =9$ it was  solved by Vinberg-Elashvili \cite{VE1978}. In fact
Vinberg-Elashvili  classified $\SL(9, \C)$-orbits on  $\Lambda^3 \C^9$, which  are in 1-1 correspondence  with $\SL(9, \C)$-orbits in $\Lambda^3 \C^{9*}$ and $\SL(9, \C)$-orbits on $\Lambda ^6\C^{9*}$. Since  the center  of $\GL(9, \C)$  acts on $\Lambda ^3 \C^9\setminus \{ 0 \}$ with the kernel  $\Z_3$, it is not hard  to  obtain
a classification  of $\GL(9,\C)$-orbits on $\Lambda ^3 \C^9$,   and hence on $\Lambda ^3 \C^{9*}$  and on $\Lambda ^6\C^{9*}$ from the classification of  the $\SL(9,\C)$-orbits on $\Lambda^3\C^9$.

As we have remarked  before,  there are infinitely  many  $\GL(n,\C)$-orbits on $\Lambda^3\C^9$,  and to solve this   complicated   classification problem Vinberg-Elashvili  made an important observation that the  standard $\SL(9,\C)$-action on $\Lambda ^3\C^9$ is equivalent  to the  action of the  adjoint group  $G_0^\C$ (also called the $\theta$-group) of  the  $\Z_3$-graded  complex  simple  Lie algebra
\begin{equation}\label{eq:e93}
\e_8 =  \g_{-1}^\C  \oplus \g_0^\C \oplus  \g_1^\C
\end{equation}
where $\g_0^\C= \ssl (9, \C)$,  $\g_1^\C= \Lambda ^3 \C^3$, $\g_{-1}^\C= \Lambda ^3 \C^{9*}$
and $G_0^\C = \SL(9, \C)/\Z_3$  is the  connected  subgroup, corresponding to the    Lie subalgebra $\g_0^\C$, of the  simply connected Lie group $E^\C_8$ whose Lie algebra is $\e_8$.
\begin{remark}\label{rem:grad}  Let $\g^\C$ be a complex Lie algebra. Any  $\Z_m$-grading  	$\g^\C: = \oplus _{i \in \Z_m}\g_i^\C$ on $\g^\C$ defines
	an automorphism  $\sigma \in \Aut(\g^\C)$ of order $m$ by setting  $\sigma(x) := \epsilon ^i x$  where
	$\epsilon = \exp(2\sqrt{-1}\pi/m)$  and $x \in \g_i^\C$.  Conversely,  any   $\sigma \in \Aut(\g^\C)$  of  order $m$ defines  a $\Z_m$-grading  $\g^\C: = \oplus _{i \in \Z_m}\g_i^\C$   by  setting $\g_i^\C := \{ x\in \g^\C|\,\sigma(x) =  \epsilon ^i x\}$.
\end{remark}
In \cite[\S 2.2]{VE1978}  Vinberg  and Elashvili   considered the automorphism $\theta ^\C$ of order 3 on $\e_8$  associated to the $\Z_3$-gradation  in (\ref{eq:e83}) \footnote{Automorphisms  of finite order of semisimple Lie algebras  have been  classified   earlier independently by Wolf-Gray \cite{WG1968} and Kac \cite{Kac1969}.}. To describe  $\theta^\C$   we  recall the root system $\Sigma$  of $\e_8$:
$$\Sigma = \{ \eps_i -\eps_j, \pm (\eps_i + \eps_j+ \eps_k)\},\, (i,j,k \text { distinct}), \sum _{i = 1} ^9\eps_i  = 0\}.$$
\begin{remark}
	\label{rem:chevalley}
 Given  a  complex semisimple Lie algebra $\g^\C$ let us choose a
Cartan subalgebra $\h_0^\C$  of $\g ^\C$.  Let $\Sigma $ be  the root system of $\g^\C$. Denote by $\{  H_\alpha, E _\alpha |\, \alpha \in \Sigma\}$  the   Chevalley  system in $\g^\C$  i.e.,
$H_\alpha \in \h_0  ^\C$    and  $E_\alpha$ is the  root  vector    corresponding   to $\alpha$  such that for any  $H \in  \h_0 ^\C$ we have  $[H, E_\alpha]  = \alpha (H)E_\alpha$,
 $[H_\alpha, E_\alpha] = 2 E_\alpha$ and $[E_\alpha ,E_{-\alpha}] = H_\alpha$ \cite[\S 32.2]{Jacobson1979}.
Then 
\begin{equation}
\g^\C = \oplus _{\alpha \in \Sigma^+_s} \la H_\alpha \ra _\C\oplus _{\alpha\in \Sigma^+} \la E_\alpha\ra _\C \oplus _{\alpha\in \Sigma^+} \la E _{ - \alpha}\ra _\C
\label{eq:root1}
\end{equation}
where $\Sigma^+\subset \Sigma$ denote the     system of positive roots, and $\Sigma^+_s$ - the  subset  of simple roots.
\end{remark}

The automorphism $\theta ^\C$ of order 3 on $\e_8$  is   defined as follows
$$ \theta^\C  _{|\la H_{\alpha}, E_{\alpha}, \:  \alpha  = \eps_i - \eps_j\ra _\C}  = Id, $$
$$\theta^\C _{|\la E_{\alpha}, \alpha = (\eps_i + \eps _j + \eps _k)\ra _\C } = \exp  (i2\pi/3) \cdot Id, $$
$$\theta^\C _{|\la  E_{\alpha}, \alpha = -(\eps_i + \eps _j + \eps _k)\ra _\C } = \exp  (-i2\pi/3) \cdot Id. $$

\begin{remark}\label{rem:normalform}  Let $\{H_\alpha, E_\alpha| \, \alpha  \in \Sigma\}$ be the Chevalley
	system  of a  complex  semisimple Lie algebra $\g^\C$. Then  $\{H_\beta, E_\alpha| \, \alpha \in\Sigma, \, \beta \in \Sigma^+_s\}$  is  a basis of {\it the normal form} $\g$, also called {\it split real form},  of  $\g^\C$. 
	 The normal form  of  the    complex simple Lie algebra
	$\e_8$  is    denoted by $\e_{8(8)}$,  and the  normal  form of $\ssl(n, \C)$
	is the  real simple Lie algebra $\ssl (n,\R)$. Clearly  the  Lie subalgebra $\e_{8(8)}$  has  the induced  $\Z_3$-grading     from  the one on $\e_8$  defined in (\ref{eq:e93})  (note that $\e_{8(8)}$ is not invariant under  $\theta^\C$), i.e., we have
	$$\e_{8(8)}= \g_{-1} \oplus \g_0 \oplus \g_1$$
	where $\g_i = \e_{8(8)} \cap \g_i ^\C$ is a real  form of $\g_i^\C$ for $ i \in \{-1, 0, 1\}$.
	 Hence there is a 1-1  correspondence
	between $\SL(9,\R)$-orbits on $\Lambda^3 \R^{9*}$ and  the   adjoint action  of the
	subgroup $G_0$, corresponding to the Lie subalgebra  $\g_0$,  of the Lie group $G_0^\C$.
\end{remark}

Now let $\F$ be  the  field  $\R$   or $\C$. Based on (\ref{eq:e83}), Remark \ref{rem:normalform},  and following \cite[\S 1]{VE1978}, \cite[Lemma 2.5]{Le2011}, we shall call   a nonzero element
$ x \in \Lambda ^3 \F^9$   {\it semisimple},  if its orbit $\SL(9, \F)\cdot x$ is closed in
$\Lambda^3 \F^9$, and {\it nilpotent},  if the closure of  its orbit $\SL(9, \F) \cdot x$  contains
the zero 3-vector. Our notion  of semisimple  and nilpotent elements  agrees with the
notion of semisimple  and nilpotent elements in   semisimple Lie algebras \cite{VE1978}, \cite{Le2011}, see also \cite{Djokovic1983} for an  equivalent  definition of semisimple and nilpotent elements in  homogeneous   components of  graded  semisimple Lie algebras.

\begin{example}\label{ex:nvb}(\cite[\S 4.4]{VE1978})  Let $x \in \Lambda ^3 \F^9$ be a degenerate
	vector of   rank $r \le  8$, where $\F = \R$ or $\C$. (The  definition of the  rank of  a  $k$-vector can be defined in the same way as the definition of the rank of a $k$-form).  Then  for any $\lambda \in \R$  there  exists   an element	$g \in \SL(9, \F)$  such that  $g \cdot x = \lambda \cdot x$. Hence  the closure  of the  orbit  $\SL(9, \F)\cdot  x$ contains    $0 \in \Lambda ^3 \F^9$  and therefore $x$ is a nilpotent  element.
\end{example}

\begin{proposition}\label{prop:decom1} Every nonzero $3$-vector $x$  in $\Lambda ^3 \F^9$  can be uniquely
	written as  $x =  p + e$, where  $p$ is a  semisimple $3$-vector, $e$ - a nilpotent 3-vector, and
	$p \wedge e = 0$.
\end{proposition}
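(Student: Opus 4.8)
The statement is the Jordan decomposition for the $\theta$-group $G_0^\F$ acting on the graded component $\g_1$, and the plan is to deduce it from the ordinary Jordan decomposition in the simple complex Lie algebra $\e_8$, resp.\ in its split real form $\e_{8(8)}$, via the identification of the standard $\SL(9,\F)$-action on $\Lambda^3\F^9$ with the adjoint action of $G_0^\F$ on $\g_1$ recalled in (\ref{eq:e93}) and Remark \ref{rem:normalform}. Throughout I use the dictionary stated just before the proposition: a nonzero $3$-vector in $\Lambda^3\F^9=\g_1$ is semisimple (resp.\ nilpotent) if and only if it is a semisimple (resp.\ nilpotent) element of $\e_8$ (resp.\ $\e_{8(8)}$), i.e.\ the operator $\ad x$ is semisimple (resp.\ nilpotent).

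\emph{Existence over $\C$.} Let $0\ne x\in\Lambda^3\C^9=\g_1^\C\subset\e_8$ and let $x=x_s+x_n$ be its Jordan decomposition in $\e_8$, so $x_s$ is semisimple, $x_n$ is nilpotent and $[x_s,x_n]=0$. Every automorphism of $\e_8$ carries the Jordan decomposition of an element to that of its image, and $\ad(\zeta y)=\zeta\,\ad y$ multiplies a semisimple (resp.\ nilpotent) operator by $\zeta=\exp(2\pi i/3)$; applying the order-three automorphism $\theta^\C$ of the grading (\ref{eq:e93}) therefore gives $\theta^\C(x_s)=\zeta x_s$ and $\theta^\C(x_n)=\zeta x_n$, so that $x_s,x_n\in\g_1^\C=\Lambda^3\C^9$. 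Put $p:=x_s$, $e:=x_n$; by the dictionary $p$ is a semisimple and $e$ a nilpotent $3$-vector. It remains to rewrite $[x_s,x_n]=0$ as $p\wedge e=0$. The bracket $[\,\cdot\,,\,\cdot\,]\colon\g_1^\C\times\g_1^\C\to\g_{-1}^\C=\Lambda^3\C^{9*}$ is an $\SL(9,\C)$-equivariant skew-symmetric bilinear map; since $\mathrm{Hom}_{\SL(9,\C)}\bigl(\Lambda^2(\Lambda^3\C^9),\,\Lambda^6\C^9\bigr)$ is one-dimensional (a standard plethysm computation) and $\Lambda^6\C^9\cong\Lambda^3\C^{9*}$ as $\SL(9,\C)$-modules, this bracket is a scalar multiple of the exterior product $\Lambda^3\C^9\times\Lambda^3\C^9\to\Lambda^6\C^9\cong\Lambda^3\C^{9*}$, and the scalar is nonzero because $\e_8$ is simple (so $[\g_1^\C,\g_1^\C]\ne 0$). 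Hence $[x_s,x_n]=0$ is equivalent to $p\wedge e=0$, which finishes existence over $\C$.

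\emph{Existence over $\R$, and uniqueness.} Since $\R$ is perfect, the Jordan decomposition exists in $\e_{8(8)}$ and is compatible with extension of scalars; thus for $x\in\g_1\subset\g_1^\C$ the components $x_s,x_n$ lie in $\e_{8(8)}$, and, by the computation above applied to $x$ regarded in $\e_8$, also in $\g_1^\C$, so $x_s,x_n\in\e_{8(8)}\cap\g_1^\C=\g_1$. The assertions that $p:=x_s$ and $e:=x_n$ are a semisimple and a nilpotent $3$-vector with $p\wedge e=0$ follow as over $\C$, the proportionality constant between bracket and exterior product being real and nonzero. For uniqueness, suppose $x=p+e=p'+e'$ with $p,p'$ semisimple and $e,e'$ nilpotent $3$-vectors and $p\wedge e=p'\wedge e'=0$. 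Translating into $\e_8$ (resp.\ $\e_{8(8)}$): $p,p'$ are semisimple elements, $e,e'$ are nilpotent elements, and $[p,e]=[p',e']=0$. By the uniqueness of the Jordan decomposition of the element $x$ in the semisimple Lie algebra $\e_8$ (resp.\ $\e_{8(8)}$), this forces $p=p'=x_s$ and $e=e'=x_n$.

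\emph{Main obstacle.} Everything above is the formal machinery of Jordan decomposition in a semisimple Lie algebra and in a $\theta$-group, combined with the equivalences already recorded in the text. The one point that genuinely needs representation-theoretic input rather than a citation is the identification of the Lie bracket $\g_1\times\g_1\to\g_{-1}$ with the exterior product, equivalently the multiplicity-one statement for $\Lambda^6\C^9$ inside $\Lambda^2(\Lambda^3\C^9)$; this is precisely what converts the commutation relation $[x_s,x_n]=0$ furnished by Jordan theory into the wedge relation $p\wedge e=0$ in the statement.
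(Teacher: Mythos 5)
Your proof is correct and follows essentially the same route as the paper, whose own proof is little more than a citation: Vinberg--Elashvili for $\F=\C$ and the Jordan decomposition of homogeneous elements in real $\Z_m$-graded semisimple Lie algebras \cite[Theorem 2.1]{Le2011} for $\F=\R$. You supply exactly the argument behind those citations: ordinary Jordan decomposition in $\e_8$, homogeneity of the two components via uniqueness and the eigenspace decomposition of $\theta^\C$, descent to $\e_{8(8)}$ by compatibility of the Jordan decomposition with complexification (which correctly sidesteps the fact, noted in Remark \ref{rem:normalform}, that $\e_{8(8)}$ is not $\theta^\C$-invariant), and the identification of the bracket $\g_1\times\g_1\to\g_{-1}$ with the wedge product; your appeal to the orbit-closure characterization of semisimple/nilpotent $3$-vectors is legitimate since the paper asserts that agreement, with references, just before the proposition.

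One justification should be repaired: the nonvanishing of the scalar relating the bracket to the exterior product does not follow from simplicity of $\e_8$ alone. A $\Z_3$-graded simple Lie algebra can have $[\g_1,\g_1]=0$; take $\ssl(2,\C)$ with $\g_0=\la H\ra_\C$, $\g_1=\la E\ra_\C$, $\g_{-1}=\la F\ra_\C$. In the case at hand the nonvanishing is nevertheless immediate from the root description recalled in Subsection \ref{subs:3form}: $\g_1^\C$ is spanned by the root vectors $E_{\eps_i+\eps_j+\eps_k}$, and $(\eps_1+\eps_2+\eps_3)+(\eps_4+\eps_5+\eps_6)=-(\eps_7+\eps_8+\eps_9)$ is again a root, so $[E_{\eps_1+\eps_2+\eps_3},E_{\eps_4+\eps_5+\eps_6}]\neq 0$. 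With that substitution (your multiplicity-one plethysm claim is correct), the argument is complete.
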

Proposition \ref{prop:decom1} has been   obtained  by Vinberg-Elashvili in \cite{VE1978} for  the case $\F= \C$.  To prove Proposition \ref{prop:decom1}  for $\F =\R$,    we    use
the  Jordan  decomposition  of a homogeneous element in a real $\Z_m$-graded Lie semisimple  algebra and   a version of the Jacobson-Morozov-Vinberg theorem  for  real  graded semisimple Lie algebras \cite[Theorem 2.1]{Le2011}.

Using Proposition \ref{prop:decom1}, Vinberg-Elashvili proposed   the following  scheme   for their classification of    3-vectors on $\C^9$. First they classified  semisimple  3-vectors  $p$. The   $\SL(9,\C)$-equivalence class  of   semisimple 3-vectors $p$ has dimension 4 -  the dimension of a maximal  subspace  consisting of commuting   semisimple  elements  in $\g_1$.  Then   the equivalence classes of semisimple elements $p$ are  divided  into  seven  types  according  to the type  of the stabilizer subgroup $\St(p)$ and the subspace $E(p): =\{ x\in \Lambda ^3 \C^9|\:  p \wedge x = 0\}$.  We assign  a  3-vector on $\F^9$ to the same  family as its semisimple part.  Then Vinberg-Elashvili described  all possible nilpotent  parts  for each  family of 3-vectors.  When the semisimple part is $p$, the latter are all the nilpotent  3-vectors $e$ of the space $E(p)$. The classification  is made  modulo the action of $\St_{\SL(9,\C)}(p)$.
Note  that   there   is only finite  number of nilpotent orbits
in $E(p)$ for any semisimple 3-vector $p$.  Therefore  the   dimension of the  orbit space
$\Lambda ^3 \C^9/ \SL(9,\C)$ is     4, which is  the dimension of  the space  of all semisimple  3-vectors. 

To classify  semisimple elements  $p \in  \Lambda ^3 \C^9$ and
 nilpotent  elements in $E(p)$ Vinberg-Elashvili     developed further  the general method  invented by Vinberg \cite{Vinberg1975a, Vinberg1975b, Vinberg1976, Vinberg1979}  for the study of the  orbits
of  the   adjoint action   of the $\theta$-group on  $\Z_m$-graded  semisimple complex Lie algebras.

  Vinberg's method has been  developed   by Antonyan  for classification of 4-forms in $\C^8$, which we shall describe  in  more detail  in Subsection 2.5,
    by L\^e  \cite{Le2011}  and Dietrich-Faccin-de Graaf \cite{DFG2015} for  real graded semisimple Lie algebras.
    L\^e  method   utilizes the existence  of  a  compatible  Cartan involution  of   a  real  graded semisimple  Lie algebra  \cite{Le2011},   which    has the same  role  as   the  existence  of the  Cartan involution in the theory  of  $\Z_2$-graded    real semisimple Lie algebras.  Dietrich-Faccin-de Graaf \cite{DFG2015}   extended Vinberg's  method further,      using   L\^e's   theorem on the  existence of a  compatible  Cartan involution that reserves the grading. 
   As a result,  we  have  partial  results  concerning the  orbit space  of the standard $\SL(9, \R)$-action
 on $\Lambda^3 \R^{9*}$  (as well  as   partial  results concerning the  orbit  space of the
 standard action  of $\SL(8,\R)$ on  $\Lambda^4 \R^{8*}$  we mentioned above).   
 By Proposition \ref{prop:decom1},   and following Vinberg-Elashvili scheme,
 the  classification  of the orbits  of $\SL(9, \R)$-action  on $\Lambda^3\R^9$    can be reduced
 to the classification  of   semisimple  elements  $p$ in $\Lambda^3 \R^9$, which is the same as   the classification of  real  forms  of  $\SL(9, \C)$-orbits   of  semisimple elements $p$ in $\Lambda ^3 \C^9$  (the classification  of the  $\SL(9, \C)$-orbits    has been  given in \cite{VE1978})
 and the classification of  nilpotent elements $e \in \Lambda ^3 \R^9$      such that  $e \wedge  p = 0$.  Note  that  $e$ is  a nilpotent   element  in the  semisimple  component  $Z(p)'$ of the zentralizer $Z(p)$  of the  semisimple element $p$.
 Thus the  latter problem is reduced to the classification
 of real forms of   complex nilpotent  orbits  in   $\Z(p)' _{\otimes \C}$,  and the classification of  the latter orbits has  been
  done in \cite{VE1978}.    L\^e's method \cite{Le2011}  and Dietrich-Faccin-de Graaf's method  of classification of nilpotent  orbits  of real graded Lie algebras \cite{DFG2015}   give  partial  information   on  the real forms of    these nilpotent  orbits.  We shall discuss
 a  similar scheme  of  classification  of 4-forms  on $\R^8$ in Subsection  2.5. 
 Currently  we  consider   the Galois  cohomology method  for     classification
  of 3-forms  on $\R^9$   promising  \cite{BGL2019},  and   therefore  we include  an appendix  outlining the Galois cohomology  method in this  paper.

\subsection{Classification of $3$-forms and $5$-forms   on $\R^8$}\label{subs:38}

The classification  of  $3$-vectors (and hence $3$-forms)  on $\R^8$
has  been given  by Djokovic  in \cite{Djokovic1983}. Similar     to  \cite{VE1978},  see (\ref{eq:e93}),
Djokovic   made an important  observation   that  for $\F = \R$  (resp. for   $\F = \C$)   the standard $\GL(8,\F)$-action on $\Lambda^3\F^8$ is  equivalent to the  action of the adjoint group $\Ad  G_0$ of the    $\Z$-graded   Lie algebra  $\g= \e_{8(8)}$ (resp.  $\g= \e_8$)  on the    homogeneous component
$\g_1$ of degree 1, where
\begin{equation}\label{eq:e83}
\g	= \g_{-3} \oplus \g_{-2} \oplus \g_{-1} \oplus \g_0  \oplus \g_1 \oplus  \g_2  \oplus \g_3.
\end{equation}
Here $\Ad G_0 = \GL(8,\F)/\Z_3$ \cite[Proposition 3.2]{Djokovic1983},    $\g_{-3} = \F^{8*}$,
$\g_{-2}  = \Lambda^2 \F^8$, $\g_{-1} = \Lambda ^3 \F^{8*}$,  $\g_0 = \gl(8, \F)$, $\g_1 = \Lambda^3 \F^8$, $\g_2  = \Lambda^2 \F^{8*}$, $\g_3 = \F^8$.

Since there is only finite  number of $\GL(n, \F)$-orbits  in $\g_1$, any element  in $\g_1$ is
nilpotent.  To study   nilpotent   elements in $\g_1 = \Lambda^3 \R^8$, as   Vinberg-Elashvili   did   for   complex  nilpotent  3-vectors   on $\Lambda^3 \C^9$,   Djokovic      used a    real version
of  Jacobson-Morozov-Vinberg's  theorem  that associates with  each nilpotent element $e \in \g_1$   a semisimple element $h(e) \in \g_0$  and  a nilpotent element  $f\in \g_{-1}$  that  satisfy the following condition \cite[Lemma 6.1]{Djokovic1983}
\begin{equation}\label{eq:jmv}
[h, e] = 2e, \: [h,f] = -2f, \: [e, f] = h.
\end{equation}
Element $h$ is defined  by $e$ uniquely  up to conjugation  and  $h = h(e)$ is called {\it a characteristic }  of $e$ \cite[Lemma 6.2]{Djokovic1983}, see also  \cite[Theorem 2.1]{Le2011}  for a     general statement.  Given $e$ and $h$, element  $f$ is defined uniquely.   A triple $(h, e,f)$  in (\ref{eq:jmv})  is called {\it  an $\ssl_2$-triple}, which we    shall denote by $\ssl_2(e)$.  With help  of  $\ssl_2(e)$-triples  Djokovic  classified     real forms  of nilpotent  orbits $\GL(8, \C)\cdot e$, where $e \in \g_1 = \Lambda ^3 \C^8$, as follows.  Denote by $Z_{\GL(8, \C)}(\ssl_2(e))$ the centralizer  of $\ssl_2(e)$ in $\GL(8, \C)$.  Let $\Phi = \Z_2$ be the  Galois group of the field extension of  $\C $ over $\R$. Then Djokovic proved that  there is a bijection from  the Galois cohomology $(\Phi, Z_{\GL(8,\C)}(\ssl_2(e )))$ to the   set of $\GL(8,\R)$-orbits contained in $\GL(8,\C) \cdot e$ \cite[Theorem 8.2]{Djokovic1983}.   A similar argument  has been   first used by  Revoy \cite{Revoy1979}  and  later by  Midoune  and Noui  for classification  of alternating forms   in dimension 8 over a  finite field \cite{MN2013}. Recall that     classification  of  $\GL(8,\C)$-orbits has been obtained by
Gurevich   and  later  this classification  is  also   re-obtained  by  Vinberg-Elashvili  in their classification of  3-vectors on $\C^9$. There are  altogether 23  $\GL(8,\C)$-orbits on $\Lambda^3 \C^8$.  In \cite{Djokovic1983} Djokovic gave  another  proof of this classification using  the   $\Z$-graded  Lie algebra $\e_8$ in  (\ref{eq:e83}). Finally Djokovic  computed  the  related  Galois cohomology to obtain the number   of real forms of  each complex  orbit  and  he also  found a canonical representation   of  each   $\GL(8,\R)$-orbit  on $\Lambda ^3 \R^8$.  The  space $\Lambda^3\R^8$  decomposes into 35 $\GL(8,\R)$-orbits.

\begin{remark}\label{rem:stable}  Since there is only finite  number  of   $\GL(8, \R)$-orbits
on $\Lambda ^3  \R^{8*}$,  there exists  $\varphi \in \Lambda ^3 \R^{8*}$  such that  the  orbit $\GL(8, \R)	\cdot \varphi$ is open in  $\Lambda ^3 \R^{8*}$.   Such a  $3$-form $\varphi$  is called
{\it stable}. Clearly  any  stable  $3$-form    $\varphi$ is nondegenerate,  i.e.,  $\rk \varphi = 8$.
In general, a $k$-form $\varphi$  on $\R^n$ is called {\it stable}, if the  orbit  $\GL(n, \R) \cdot \varphi$  is open in $\Lambda ^k \R^n$.  Clearly  any   symplectic  form is  stable.  It is not hard  to see that  if $\varphi \in \Lambda ^k \R^n$  is open,   and $k \ge 2$, then   either  $k =3$ and $n = 5, 6, 7,8$, or $k = 4$  and $n = 6, 7$, or $k = 5$ and $n =8$.  Stable forms  on $\R^8$   have been studied in deep  by  Hitchin \cite{Hitchin2001}, Witt \cite{Witt2005} and later by L\^e-Panak-Van\v zura  in \cite{LPV2008}, where they       classified     all stable  forms  on $\R^n$ (they  proved that   stable $k$-forms exist on $\R^n$  only in dimensions $n =  6, 7, 8$ if $ 3\le k \le n-k$), and  determined
their stabilizer   groups \cite[Theorem 4.1]{LPV2008}. 
\end{remark}

\begin{remark}\label{rem:lower}  Djokovic's   classification   of 3-vectors  on $\R^8$  contains
  the classification of 3-vectors  on  $\R^6$  and  the classification  of  3-vectors on $\R^7$  by Theorem \ref{thm:red}.
The classification  of 3-forms  on $\R^7$ has been first  obtained by Westwick \cite{Westwick1981}  by  adhoc method.  There  are    8   equivalence  classes  of   multisymplectic $3$-forms
on $\R^7$,  which  are  the real forms  of 5  equivalent classes of   multisymplectic  3-forms  on
$\C^7$, and there  are 6   equivalence  classes   of  $3$-forms  on $\R^6$, which are  the  real forms
of 5  equivalence  classes  of $3$-forms   on $\C^6$. The stabilizer  of $3$-forms  in $\R^6$  has been
determined  in \cite{Hitchin2000} and  the stabilizer  of multisymplectic  $3$-forms  in $\R^7$  has been defined  in \cite{BV2003}. The stabilizer  of   $3$-forms on $\F^7$   has been described by Cohen-Helminck in \cite[Theorem 2.1]{CH1988} for any  algebraically closed   field  $\F$.
	\end{remark}

\begin{remark}
	\label{rem:auto8}  There  are   21  equivalence  classes  of   multisymplectic  $3$-forms
	on $\R^8$ which are   the real forms  of  13  equivalence  classes  of multisymplectic  $3$-forms
	on $\C^8$  \cite[\S 9]{Djokovic1983}.  A complete  list  of  the stabilizer  groups
	$\St_{\GL(8,\R)}(\varphi)$ of each
	multi-symplectic  3-form $\varphi$  on $\R^8$ has not been  obtained  till now  according to our knowledge. The  stabilizer  $\St_{\GL(8,\C)}(\varphi)$  has been  obtained   by Midoune in his  PhD Thesis \cite{Midoune2009}, see also \cite{MN2013}.  In \cite{Djokovic1983}  Djokovic   computed  the dimension of
each   $\GL(8,\R)$-orbit in $\Lambda^3\R^8$  and the  centralizer  $Z_{\GL(8,\R)}(\ssl_2 (e))$  for
each nilpotent  element $e \in \e_{8(8)}$.  It follows  that the stabilizer  algebra  $Z_{\gl(8,\R)} (\varphi)$   of $3$-forms $\varphi \in \Lambda^3 \R^8$  forms a  complete  system  of invariants
of the  $\GL(8,\R)$-action on $\Lambda ^3 \R^8$.  In Proposition  \ref{prop:3multi} below  we show that  the stabilizer  of
any multisymplectic   $3$-form  $\varphi$ on $\R^8$  is not connected.
\end{remark}	
	
\begin{proposition}
	\label{prop:3multi}  For any    multisymplectic      $3$-form  $\varphi \in \Lambda^3\R^{8*}$   we  have
	$\St_{\GL(8,\R)}(\varphi) \cap \GL ^-  (8,\R) \not = \emptyset$. Hence the $\GL(8,\R)$-orbit of  any  $3$-form  on $\mathbb R^8$ is connected.
\end{proposition}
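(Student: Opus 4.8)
The second statement of Proposition~\ref{prop:3multi} follows at once from the first one together with Proposition~\ref{prop:connected}: if $\varphi\in\Lambda^3\R^{8*}$ is degenerate then $\GL(8,\R)\cdot\varphi$ is connected by Proposition~\ref{prop:connected}(3); if $\varphi$ is multisymplectic then the first statement gives $\St_{\GL(8,\R)}(\varphi)\not\subset\GL^+(8,\R)$, so by Proposition~\ref{prop:connected}(2) the orbit $\GL(8,\R)\cdot\varphi$ is connected; and the zero form has a one-point orbit. So the content of the Proposition is the assertion $\St_{\GL(8,\R)}(\varphi)\cap\GL^-(8,\R)\neq\emptyset$ for a multisymplectic $3$-form $\varphi$ on $\R^8$, and this is what I would prove.

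First I would make a small reduction: it suffices to produce one $g_0\in\GL(8,\R)$ with $\det g_0<0$ and $g_0^*\varphi=\lambda\varphi$ for some $\lambda\in\R^*$. Indeed, if $\mu:=\lambda^{-1/3}\in\R^*$ denotes the real cube root and $g_1:=\mu\,g_0$, then $g_1^*\varphi=\mu^3\lambda\,\varphi=\varphi$, while $\det g_1=\mu^8\det g_0$ has the same sign as $\det g_0$ because $\mu^8>0$; hence $g_1\in\St_{\GL(8,\R)}(\varphi)\cap\GL^-(8,\R)$. In other words, it is enough to find an orientation-reversing element in the conformal stabilizer of $\varphi$, which leaves a bit more room.

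After this reduction the proof is a finite verification over Djokovic's classification of the $21$ multisymplectic $3$-forms on $\R^8$ \cite[\S 9]{Djokovic1983}, using his normal forms together with his computation of the centralizers $Z_{\GL(8,\R)}(\ssl_2(e))$ of the associated $\ssl_2$-triples. Since the unipotent radical of $\St_{\GL(8,\R)}(\varphi)$ lies in $\GL^+(8,\R)$, it is enough to exhibit an orientation-reversing element already in the reductive Levi part $Z_{\GL(8,\R)}(\ssl_2(e))$, and by Remark~\ref{rem:auto8} these data are determined up to conjugacy by the orbit. The decisive cases are the stable forms, i.e.\ those $\varphi$ whose $\GL(8,\R)$-orbit is open in $\Lambda^3\R^{8*}$ (Remark~\ref{rem:stable}, \cite[Theorem 4.1]{LPV2008}): here $\varphi$ is, up to equivalence, the Cartan $3$-form $\Phi(X,Y,Z)=B(X,[Y,Z])$ on an $8$-dimensional real form $\g_0$ of $\ssl(3,\C)$ (for instance $\ssl(3,\R)$ or $\su(3)$), with $\GL(8,\R)$ acting through $\Ad$. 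Because $\ssl(3,\C)$ has a nontrivial outer automorphism, $\g_0$ carries an involutive automorphism $\tau$ --- one may take $\tau(X)=-X^{\top}$ on $\ssl(3,\R)$, or $\tau(X)=\bar X$ on $\su(3)$; any automorphism preserves $B$ and $[\,\cdot\,,\,\cdot\,]$, hence $\Phi$, so $\tau\in\St_{\GL(8,\R)}(\varphi)$. On the $8$-dimensional space $\g_0$ the involution $\tau$ has a $3$-dimensional $(+1)$-eigenspace and a complementary $5$-dimensional $(-1)$-eigenspace, so $\det\tau=(-1)^5=-1$ and $\tau\in\GL^-(8,\R)$. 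For the remaining, non-open, multisymplectic orbits the centralizer $Z_{\GL(8,\R)}(\ssl_2(e))$ is larger, and an orientation-reversing element is visible directly in Djokovic's description of it.

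I expect the main obstacle to be conceptual rather than computational: the statement is genuinely special to $(n,k)=(8,3)$ and is not formal, since for other pairs a $\GL(n,\R)$-orbit of a $k$-form may indeed have two connected components --- this occurs already in Theorem~\ref{thm:k=2}(2) and in Theorem~\ref{thm:k=n-2}(2). Hence some input from the classification cannot be avoided, and the only place where the orientation-reversing element is not essentially a signed coordinate change is the short list of stable forms, where it is the outer automorphism of the relevant real form of $\ssl(3,\C)$; the rest is routine inspection of \cite{Djokovic1983}.
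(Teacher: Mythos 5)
Your proposal is correct and follows essentially the same route as the paper: the degenerate case is dispatched by Proposition \ref{prop:connected}, and for multisymplectic forms one performs a finite, case-by-case check over Djokovic's list of canonical representatives, exhibiting an orientation-reversing (conformal) stabilizer element for each. The paper's own proof is in fact terser than yours --- it simply asserts that such a $g\in \St_{\GL(8,\R)}(\varphi_0)\cap\GL^-(8,\R)$ is found for each normal form --- so your additional reductions (rescaling by $\lambda^{-1/3}$, passing to the reductive part $Z_{\GL(8,\R)}(\ssl_2(e))$, and the explicit outer involution with determinant $(-1)^5$ for the stable forms) are a welcome elaboration of the same argument rather than a different method.
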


\begin{proof}  For each  equivalence  class  of a  $3$-form  $\varphi$  of rank 8  we choose  a canonical element $\varphi_0$ in the  Djokovic's    list \cite[p. 36-37]{Djokovic1983}. Then  we find
an element  $g \in \St_{\GL(8,\R)}(\varphi_0) \cap \GL^{-}(8,\R)$.   Hence   the  $\GL(n,\R)$-orbit  of each  multisymplectic  3-form on $\R^8$  is   connected. If  $\varphi$ is not    multisymplectic,  the  orbit  $\GL(8,\R)\cdot \varphi$ is connected  by Proposition \ref{prop:connected}. This completes  the proof  of  Proposition  \ref{prop:3multi}.
\end{proof}
\

Proposition  \ref{prop:3multi}  and  Proposition \ref{prop:connected}  imply immediately the following

\begin{corollary}\label{thm:58} (cf. \cite[Proposition 4.1]{Ryvkin2016}) The Poincar\'e  map $P_\Om$ induces  an  isomorphism  between
	$\GL(8,\R)$-orbits  on $\Lambda ^3 \R^8$  and $\GL(8,\R)$-orbit  on $\Lambda^5\R^{8*}$.  Each
	$\GL(8,\R)$-orbit  on $\Lambda ^5 \R^8$ is connected.
\end{corollary}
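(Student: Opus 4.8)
The plan is to exploit that the Poincar\'e isomorphism $P_\Om\colon\Lambda^3\R^8\to\Lambda^5\R^{8*}$, $\xi\mapsto i_\xi\Om$, is a linear isomorphism --- in particular a homeomorphism --- and to transport to $\Lambda^5\R^{8*}$ the information on $3$-vectors furnished by Proposition \ref{prop:3multi} (via Proposition \ref{prop:isofv}). First I would record the equivariance of $P_\Om$: for $g\in\GL(8,\R)$ and $\xi\in\Lambda^3\R^8$ the computation carried out in the proof of Lemma \ref{lem:poincare} (which is valid for an arbitrary $g$, not only for $g\in\GL^+(8,\R)$) gives $P_\Om(g\cdot\xi)=(\det g)\,g\cdot P_\Om(\xi)$. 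Hence $P_\Om$ intertwines the standard $\GL(8,\R)$-action on $\Lambda^3\R^8$ with the twisted action $g\colon\psi\mapsto(\det g)\,g\cdot\psi$ on $\Lambda^5\R^{8*}$, and therefore $P_\Om\bigl(\GL(8,\R)\cdot\xi\bigr)$ is exactly the orbit of $P_\Om(\xi)$ under that twisted action.

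The key point is that on $\Lambda^5\R^{8*}$ the twisted $\GL(8,\R)$-orbits coincide with the ordinary ones. Indeed, the scalar matrix $tI$ ($t\neq 0$) lies in $\GL(8,\R)$ and acts on $\Lambda^5\R^{8*}$ by $t^{-5}$; since $5$ is odd, $t\mapsto t^{-5}$ is a bijection of $\R\setminus\{0\}$, so for a given $g$ one may choose $t$ with $t^{-5}=\det g$, and then $h:=(tI)g$ satisfies $h\cdot\psi=(\det g)\,g\cdot\psi$. Writing $g=(sI)h$ and using that $s\mapsto s^{3}$ is a bijection of $\R\setminus\{0\}$ (here $3=8-5$, equivalently $8$ is even), the $\det$-factor is absorbed in the reverse direction as well. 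Thus $P_\Om\bigl(\GL(8,\R)\cdot\xi\bigr)=\GL(8,\R)\cdot P_\Om(\xi)$ for every $\xi$, so $P_\Om$ induces a bijection between the $\GL(8,\R)$-orbit spaces of $\Lambda^3\R^8$ and of $\Lambda^5\R^{8*}$, necessarily preserving openness of orbits and closure relations since $P_\Om$ is a homeomorphism (compatibly with Proposition \ref{prop:connected}(1)). An equivalent way to see this, closer to Proposition \ref{prop:connected}(2): one must check that no $5$-form $\psi$ on $\R^8$ has stabilizer inside $\GL^+(8,\R)$; but if $g_-\in\GL^-(8,\R)$ fixes $\xi:=P_\Om^{-1}(\psi)$ --- which exists by Proposition \ref{prop:3multi} --- then $(\det g_-)\,g_-\cdot\psi=\psi$, and the matrix $(\det g_-)^{-1/5}\,g_-$ lies in $\GL^-(8,\R)$ and fixes $\psi$.

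It now remains to note connectedness. By Proposition \ref{prop:3multi}, together with Proposition \ref{prop:isofv} to pass from $3$-forms to $3$-vectors, every $\GL(8,\R)$-orbit on $\Lambda^3\R^8$ is connected; applying the homeomorphism $P_\Om$, which by the previous paragraph maps these orbits onto the $\GL(8,\R)$-orbits on $\Lambda^5\R^{8*}$, we get that each $\GL(8,\R)$-orbit on $\Lambda^5\R^{8*}$ is connected. The only ingredient that is not purely formal is the orbit-coincidence of the middle paragraph, i.e.\ the upgrade of Proposition \ref{prop:connected}(1) from $\GL^+(8,\R)$ to $\GL(8,\R)$; the ``difficulty'' is merely to keep the $\det$-bookkeeping straight and to use the parity facts ($8$ even, $3$ and $5$ odd) together with Proposition \ref{prop:3multi}.
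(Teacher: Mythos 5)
Your proof is correct and follows essentially the same route as the paper, which deduces the corollary directly from Proposition \ref{prop:3multi} and Proposition \ref{prop:connected} (via Lemma \ref{lem:poincare}); you have merely made explicit the determinant/scalar bookkeeping (using that $8$ is even and $3$, $5$ are odd) that the paper leaves as ``immediate.''
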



\subsection{Classification of 4-forms on $\R^8$}\label{subs:48}
Classification of    4-forms  on $\C^8$, whose equivalence  is  defined  via the standard action of $\SL(8,\C)$,   has  been given  by Antonyan \cite{Antonyan1981}, following the  scheme  proposed by Vinberg-Elashvili  for the classification of 3-vectors on $\C^9$.
In \cite{Le2011}  L\^e  proposed a  scheme    of classification of   4-forms  on $\R^8$
as  application of her  study  of the adjoint orbits  in $\Z_m$-graded  real semisimple Lie algebras. In this subsection  we outline Antonyan's  method   and L\^e's  method.

Let $\F = \C$  (resp. $\R$). Denote  by $\g$
the  exceptional   complex   simple Lie algebra $\e_7$ (rep.  $\e_{7(7)}$ - the split form  of $\e_7$). The starting   point  of Antonyan's       work on the classification  on 4-vectors on $\C^8$ (resp.   the starting  point  of L\^e's  scheme  of classification  of 4-forms  on $\R^8$)  is the  following observation,  cf. (\ref{eq:e93}), (\ref{eq:e83}).     The standard
$\GL(8, \F)$-action on  $\Lambda ^4 \F^8$  is equivalent  to  the action of  the $\theta$-group  of
the  $\Z_2$-graded   simple Lie algebra
\begin{equation}\label{eq:e84}
\g= \g_0 \oplus \g_1
\end{equation}
on its  homogeneous  component $\g_1$,  which  is  isomorphic to $\Lambda ^4 \F^8$.  Here $\g_0 = \ssl (8,\F)$.

Let us describe   the  components $\g_0$ and $\g_1$  in (\ref{eq:e84}) for the case $\F = \C$ using the  root decomposition of $\e_7$.  Recall that  $\e_7 $ has the following   root system:
	$$\Sigma=\{ \eps _i-\eps _j,  \eps_p + \eps _q + \eps _r + \eps _s, |\, i \not = j,  ( p,  q, r, s \text {  distinct}), \sum _{i =1} ^8 \eps _i = 0\}.$$
By Remark \ref{rem:grad},  the  $\Z_2$-grading  on $\e_7$ is defined  uniquely   by  an involution  $\theta ^\C$  of $\e_7$. In terms  of the Chevalley system  of  $\e_7$, see Remark \ref{rem:chevalley},   the  involution $\theta^\C$ is defined as follows:
	$${\theta^\C} _{| \h_0 } = Id,$$
	$${\theta^\C}  ( E _\alpha ) = E_\alpha, \text { if }  \alpha  = \eps _i - \eps _j ,$$
	$${\theta ^\C} ( E_\alpha ) =  - E _\alpha , \text { if } \alpha = \eps _i + \eps _j + \eps _k + \eps _l.$$
	
Note that  $\theta: = \theta ^\C_{ |\g= \e_{7(7)}}$  is  an involution  of $e_{7(7)}$ and it defines the induced  $\Z_2$-gradation  from   $\e_7$  on  $\e_{7(7)}$.

Following the Vinberg-Eliashivili scheme  of the classification  of 3-vectors on $\C^9$,
Antonyan  classified  $\SL(8,\C)$-equivalent 4-vectors on $\C^8$   by using the Jordan decomposition  (Proposition \ref{prop:decom1}).  First he classified  all  semisimple
4-vectors  on  $\C^8$   using  Vinberg's theory  on      finite  automorphisms of  semisimple algebraic groups \cite{Vinberg1975a}, which has been    employed  by Vinberg-Elashvili for the classification  of semisimple  3-vectors  as we mentioned above.
Next  we include    each semisimple  element   $x\in \g_1$  of the $\Z_2$-graded    complex  Lie algebra  $\e_7$ into  a  Cartan  subalgebra  of $\g_1$, which  is defined  as  a maximal subspace
in $\g_1$ consisting of  commuting  semisimple elements \cite{Vinberg1976} (this definition is also  applied to real  or complex   $\Z_m$-graded  semisimple Lie algebras $\g$).  If $\g$ is a    complex $\Z_m$-graded  sesmisimple Lie algebra, then  all the (complex) Cartan  subalgebras  in $\g_1$ are   conjugate under the action of  the   adjoint group $G_0^\C$. To reduce   the  classification of semisimple elements
in $\g_1$  further   we introduce the notion of the  Weyl group $W (\g, \Cc)$ of   a  complex  $\Z_m$-graded semisimple  Lie algebra  $\g$ w.r.t. to a Cartan subalgebra $\Cc \subset \g_1$ as  follows. Let
$G^\C$ be  the  connected  semisimple  Lie  algebra  having    the Lie algebra $\g$ and  $G_0 ^\C$  the Lie subgroup of the    $G^\C$  having  the Lie algebra $\g_0$.  We    define
$$N_0 (\Cc): =\{g\in G_0|\: \forall x\in \Cc\:  g(x) \in \Cc \} ,$$
$$Z_0 (\Cc): = \{ g\in G_0|\: \forall x \in \Cc\:  g(x) = x\}.$$
Then  $W(\g,\Cc): = N_0 (\Cc)/Z_0(\Cc)$.
The Weyl group  $W(\g, \Cc)$ is finite, moreover $W(\g, \Cc)$ is generated by complex reflections, which implies that  the algebra  of $W(\g,\Cc)$-invariants on $\Cc$  is free \cite{Vinberg1975a}. Furthermore,  two semisimple elements in $\Cc$ belong to the same
$G_0^\C$-orbit if and  only if  they are in the same orbit  of  the  $W(\g, \Cc)$-action on $\Cc$.
Antonyan  showed that  the Weyl group   $W(\e_7, \Cc)$  has order 2903040  and the generic  semisimple element has  trivial stabilizer.  He also    found a    basis   of   a   Cartan algebra  $\Cc \subset \g_1$, which is   also a  Cartan subalgebra  of  the Lie  algebra $\e_7$. Thus  the set of $\SL(8,\C)$-equivalent  semisimple   4-vectors on $\C^8$ has  dimension  7.   This set is divided  into 32  families   depending on the type of  the stabilizer of the action of  the  Weyl group  $W(\e_7,\Cc)$ on the  Cartan algebra $\Cc$.    For the classification of nilpotent   elements and  mixed   4-vectors  on $\C^8$  Antonyan  used  the Vinberg method of support  \cite{Vinberg1979}.

L\^e suggested the  following scheme of classification of the $\SL(8,\R)$-orbits   on $\Lambda^4\R^8$   \cite{Le2011}.  Observe that  we also  have  the Jordan decomposition of    each  element in  $\Lambda ^4\R^8$  into  a sum  of   a semisimple element  and a nilpotent element \cite[Theorem 2.1]{Le2011},  as in    Proposition \ref{prop:decom1}.  First, we  classify  semisimple  elements, using the   fact  that  every Cartan subspace $\Cc \subset \g_1$ is conjugated  to a standard Cartan subspace  $\Cc_0$  that   is invariant  under  the action  of a  Cartan involution $\tau_\u$ of   the  $\Z_2$-graded   Lie  algebra $\e_{7(7)}$ \cite{OM1980}.
The set of all standard  Cartan   subspaces   $\Cc_0\subset \g_1\subset \g= \e_{7(7)}$, and more  generally,   the set of all standard   Cartan subspaces  $\Cc \subset \g_1$ in any $\Z_2$-graded real semisimple Lie algebra  $\g$, has been  classified  by Matsuki and Oshima  in  \cite{OM1980}.  L\^e   decomposed
each  semisimple element  into  a  sum of  an elliptic  semisimple  element, i.e., a  semisimple  element whose adjoint action on $\g_{\otimes \C} = \e_7$ has purely imaginary  eigenvalues,  and a real semisimple element, i.e.,  a  semisimple  element whose adjoint action on $\g_{\otimes \C} = \e_7$ has  real  eigenvalues, cf. \cite{Rothschild1972}   for a similar  decomposition  of  semisimple elements  in a real sesimsimple Lie algebra.  The classification  of real semisimple elements and commuting  elliptic semisimple  elements in $\Cc_0 \subset \g_1$ is then reduced  to the classification  of  the orbits of  the
Weyl groups   of associated  $\Z_2$-graded  symmetric   Lie algebras  on their Cartan  subalgebras \cite[Corollary 5.3]{Le2011}.
    As  in  \cite{VE1978} and \cite{Antonyan1981},  the classification  of  mixed  4-vectors  on $\R^8$  is reduced  to   the classification of  their  semisimple parts    and the corresponding  nilpotent   parts.  The classification of nilpotent   parts    can be  done using algorithms   in real algebraic geometry
based  on  L\^e's theory  of   nilpotent orbits  in    graded semisimple Lie algebras \cite{Le2011}, that develops  further  Vinberg's method  of   support also called carrier algebra.
In \cite{DFG2015}   Dietrich-Faccin-de Graaf  developed   Vinberg's method  further  and   applied  their method
to classification of  the  orbits of homogeneous  nilpotent  elements in certain   graded real semisimple  Lie algebras. In particular,   they    have a new  proof  for  Djokovic's classification of  3-vectors  on $\R^8$.

\begin{remark}\label{rem:fin} (1) The method  of $\theta$-group   has been  extended  by Antonyan and Elashvili for classifications of spinors  in dimension 16 \cite{AE1982}.

(2)  Many results of classifications of $k$-vectors  over  the fields $\R$ and $\C$  have  their
analogues  over       other fields   $\F$  and  their  closures $\overline \F$  \cite{MN2013}. Over the field  $\F = \Z_2$  the classification of 3-vectors in $\F^n$  is related to some open problems in the theory of
self-dual codes  \cite{RS1998}, \cite{BV2017}. Till now there  is no  classification  of  3-vectors
in $\F^n$ if $n \ge 9$  except the cases  $\F  = \C$ (\cite{VE1978}) and $\F  = \Z_2$ (\cite{HP2019}).

\end{remark}




\section{Geometry defined by differential forms}\label{subs:subs}

In this section  we briefly  discuss  several  results  and open questions  on the existence 
of  differential $k$-forms   of given type on a smooth  manifold,  where $k =2,3,4$.

$\bullet$ Assume that  $k =2$   and  $\varphi$  is a closed 2-form  with constant  rank on $M^n$,  then $\varphi$  is called
{\it a  pre-symplectic form}  \cite{Vaizman1983}. Till now there is no general  necessary and sufficient  condition  for the existence of a  pre-symplectic  form $\varphi$  on  a manifold  $M^n$  except  the  case  that  $\varphi$ is a symplectic form.  Necessary conditions for the existence of  a symplectic form  $\varphi$  on $M^{2n}$  are the
existence of an almost complex structure on $M^{2n}$ and  if $M^{2n}$ is closed, the existence
of a cohomology class   $ a\in H^2 (M^{2n}; \R)$  with $a ^n >0$.   If $M^{2n}$ is open,  a theorem of Gromov \cite{Gromov1985, Gromov1986} asserts that the existence of an almost complex structure is also sufficient, his  argument   has been   generalized  in \cite{EM2002}  and used in  the   proof of Theorem \ref{thm:gt2}(2) below. Taubes using Seiberg-Witten theory   proved  that there  exist  a closed 4-manifold $M^4$ admitting   an almost complex structure  and $ a\in H^2 (M,\R)$ such that
$a^2 \not = 0$  but $M^4$ has no symplectic structure \cite{Taubes1995}.  Note that   for any    symplectic  form $\om$ on $M^{2n}$ there exists  uniquely  up to homotopy an almost complex structure   $J$ on $M^{2n}$ that  is compatible with $\om$, i.e., $g(X, Y ) : = \om (X, JY )$  is a Riemannian metric on $M^{2n}$. Connolly-L\^e-Ono  using the
Seiberg-Witten  theory  showed that a half  of all homotopy classes of almost complex structures on  a certain class of oriented compact 4-manifolds
is not compatible with any symplectic structure  \cite{CLO1997}.

$\bullet$  Manifolds  $M^{2n}$  endowed  with   a   nondegenerate   conformally  closed 2-form  $\om$, i.e., $d\om  = \theta \wedge  \om$ for   some  closed 1-form  $\theta$ on $M^{2n}$,   are called   {\it conformally   symplectic  manifolds}.  A necessary    condition   for the existence   of   nondegenerate   2-form $\om$ on $M^{2n}$ is the
  existence  of an almost complex structure  on $TM^{2n}$, which is equivalent to the existence  of  a section $J$ of the associated bundle $\SO(2n)/\U(n)$, see  \cite{Steenrod1951}  where a necessary  condition  for the existence  of a  section  $J$  has been determined in terms of the Whitney-Stiefel  characteristic  classes.  We don't  have   necessary   and sufficient  conditions  for  the existence  of   a general  conformally symplectic
  form  on  $M^{2n}$, except the  existence of  an almost  complex structure on $M^{2n}$.  In \cite{LV2015} L\^e-Van\v zura  proposed new cohomology theories
  of locally conformal symplectic  manifolds.

 $\bullet$ Assume  that $k = 3$  and  $\varphi$  is a stable  3-form on $M^8$.
 In \cite{NR1997}  Noui and Revoy   proved  that  the   Lie algebra  of the  stabilizer   of $\varphi$
 is    a real form  of   the Lie algebra $\ssl(3,\C)$.  Hence stable  3-forms  on $\R^8$  are equivalent  to the Cartan 3-forms on the real  forms  $\ssl(3, \R)$, $\su (1,2)$  and $\su(3)$ of the  complex Lie algebra $\ssl(3,\C)$. Later  in \cite{LPV2008}  L\^e-Panak-Van\v zura    reproved  the  Noui-Revoy result  by      associating to    each 3-form on  $\R^8$   various     bilinear forms,   which are  invariants of the $\GL(8,\R)$-action on $\Lambda^3 \R^{8*}$,   and studied  properties  of these  forms.  They computed the stabilizer group of a  stable form  $\varphi \in \Lambda ^3 \R^{8*}$ and found  a necessary and sufficient  condition for a closed orientable  manifold $M^8$
 to admit a stable 3-form  \cite[Proposition 7.1]{LPV2008}. In  \cite{Le2013} L\^e initiated  the study of geometry and topology of manifolds  admitting  a Cartan   3-form  associated with  a simple compact  Lie algebra.

 $\bullet$ Necessary and sufficient   conditions  for    a closed connected 7-manifold  $M^7$  to admit a
 multisymplectic 3-form  has been  determined in \cite{Salac2018}, see also Appendix \ref{sec:tg2}  below.  There  are two   equivalence  classes
 of stable 3-forms  on $\R^7$ with  the stablizer  groups $G_2$  and   $\tilde G_2$ respectively.
 Since  $G_2$  and   $\tilde G_2$  are exceptional  Riemannian and pseudo Riemannian  holonomy groups,
 manifolds  $M^7$ admitting   stable 3-form  of  $G_2$-type   (resp. of $\tilde G_2$-type) are in focus   of research  in Riemannian
 geometry (respectively in pseudo Riemannian geometry) \cite{Joyce2007}, \cite{LM2012}, \cite{KLS2018}.  As we have mentioned,  the study  of  geometries  of stable forms in dimension 6,7, 8  have been   initiated by Hitchin \cite{Hitchin2000, Hitchin2001}.

$\bullet$ It is  worth  noting  that    the algebra  of parallel  forms on  a quaternion  K\"ahler manifold  is generated by the quaternionic 4-form,  the algebra  of parallel forms  on a Spin(7)-manifold is generated by  the self-dual Cayley 4-form.   Riemannian  manifolds  admitting    parallel 2-forms of maximal  rank are  K\"ahler  manifolds, which  are the most studied   subjects in geometry, in particular  in  the  theory of minimal submanifolds, see e.g., \cite{LF1987}.

 \appendix
 \section{Manifolds admitting a $\tilde G_2$-structure}\label{sec:tg2}

 In  2000   Hitchin initiated  the study of   geometries   defined  by  differential forms  \cite{Hitchin2000},   and subsequently in \cite{Hitchin2001} he  initiated the  study of  geometries defined by stable  forms.   The latter geometries  have been   investigated further in   \cite{Witt2005},  \cite{LPV2008}.  A   necessary and  sufficient condition  for a manifold $M$ to admit  a stable form  $\varphi$ of $G_2$-type, i.e.,  the stabilizer of $\varphi$  is isomorphic  to the group $G_2$,    has been       found by Gray \cite{Gray1969}.   In this   Appendix we     state and prove a  necessary  and sufficient condition  for a manifold $M$ to admit  a stable form  $\varphi$ of $\tilde G_2$-type.  
  We recall that  a 3-form  $\varphi$ on
 $\R^7$ is called of $\tilde G_2$-type, if it lies on the $\GL(\R^7)$-orbit of a 3-form
 $$\varphi_0=\theta_1 \wedge \theta_2 \wedge \theta_3 +\alpha_1 \wedge \theta _1 + \alpha_2 \wedge \theta_2 + \alpha_3\wedge \theta _3.$$
 Here $\alpha_1, \alpha_2$ are 2-forms on $\R^7$ which can be written as
 $$\alpha_1 = y_1\wedge y_2 + y_3\wedge y_4, \: \alpha_2 = y_1 \wedge y_3 - y_2\wedge
 y_4, \: \alpha_3 = y_1 \wedge y_4 + y_2 \wedge y_3$$
 and $(\theta_1, \theta_2, \theta_3, y_1, y_2, y_3, y_4)$ is an  oriented basis of
 $\R^{7*}$.

 Bryant showed that    $\St_{\GL(7,\R)}(\varphi_0) = \tilde G_2$ \cite{Bryant1987}. He also  proved that   $\tilde G_2$ coincides with the automorphism
 group of  the split octonians \cite{Bryant1987}.

 \begin{theorem}\label{thm:gt2}  (1) Suppose that $M^7$ is a compact 7-manifold. Then  $M^7$ admits
 	a 3-form of $\tilde G_2$-type, if and only if $M^7$ is orientable  and spinnable.  Equivalently  the
 	first and second Stiefel-Whitney  classes of $M^7$ vanish.
 	
 	(2)  Suppose  that
 	$M^7$ is an open manifold which admits an embedding to a compact orientable  and spinnable
 	7-manifold. Then $M^7$ admits a closed 3-form  $\varphi$  of $\tilde G_2$-type.
 \end{theorem}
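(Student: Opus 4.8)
The plan is to translate the existence of a $3$-form of $\tilde G_2$-type into a reduction of structure group and then combine classical obstruction theory (for part~(1)) with the Gromov--Eliashberg--Mishachev $h$-principle (for part~(2)).

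\textbf{Reformulation and necessity.} A $3$-form $\varphi$ on $M^7$ with $\varphi(x)$ of $\tilde G_2$-type for every $x$ is exactly a reduction of the structure group of the frame bundle of $M^7$ from $\GL(7,\R)$ to $\tilde G_2=\St_{\GL(7,\R)}(\varphi_0)$, which by Bryant's computation is a copy of the split real form $G_{2(2)}$ of $G_2$. Being a connected reductive subgroup of $\SL(7,\R)\subset\GL^+(7,\R)$, the group $G_{2(2)}$ deformation retracts onto its maximal compact subgroup $K\cong\SO(4)$; with the reference metric chosen to be $|b_{\varphi_0}|$, one has $K\subset\SO(7)$ acting on $\R^7\cong\R^4\oplus\R^3$ by the vector representation on $\R^4$ and through a double cover $\rho\colon\SO(4)\to\SO(3)$ on $\R^3$. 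Since $G_{2(2)}$ is connected and lies in $\GL^+(7,\R)$, a $\tilde G_2$-structure orients $M^7$, so $w_1(M^7)=0$ is necessary. For $w_2$, the key observation is that this twisted inclusion $K=\SO(4)\hookrightarrow\SO(7)$ is \emph{null} on $\pi_1$: a generator of $\pi_1(\SO(4))=\Z_2$ maps to a loop that is a full $2\pi$-rotation in each of two $2$-planes, one in the $\R^4$-summand and one in the $\R^3$-summand, and $2\cdot[\text{generator of }\pi_1(\SO(7))]=0$. Hence $K$ lifts to $\mathrm{Spin}(7)$, so $G_{2(2)}\hookrightarrow\SL(7,\R)$ lifts to the connected double cover $\widetilde{\SL}(7,\R)$; composing a $\tilde G_2$-reduction of the frame bundle with this lift yields a $\widetilde{\SL}(7,\R)$-structure, i.e.\ a spin structure, so $w_2(M^7)=0$ is necessary. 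Since ``$M^7$ orientable and spinnable'' means precisely $w_1=w_2=0$, this proves the ``only if'' part of~(1).

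\textbf{Sufficiency in (1).} Suppose $M^7$ is compact with $w_1=w_2=0$; fix an orientation, a metric and a spin structure, reducing the structure group to $\mathrm{Spin}(7)$, and reduce further to $K$ in two stages. First, $\mathrm{Spin}(7)\to G_2$: the fibre $\mathrm{Spin}(7)/G_2\cong S^7$ is $6$-connected with $\pi_7=\Z$, so the sole obstruction to a section lies in $H^8(M^7;\Z)=0$ and a $G_2$-structure exists (Gray's theorem, \cite{Gray1969}). Second, $G_2\to K$: the fibre $G_2/K$ is an $8$-dimensional simply connected manifold with $\pi_2=\Z_2$, $\pi_3=0$, $\pi_4=\Z$, and $\pi_5,\pi_6$ finite $2$-groups, and the obstructions to reducing the $G_2$-structure are the relative $k$-invariants of $BK\to BG_2$ pulled back along the classifying map of $TM^7$. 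Using $H^*(BG_2;\Z_2)=\Z_2[w_4,w_6,w_7]$ and $H^5(BG_2;\Z)=0$: the degree-$3$ relative $k$-invariant lies in $H^3(BG_2;\Z_2)=0$, the degree-$5$ one in $H^5(BG_2;\Z)=0$ (the degree-$4$ obstruction vanishes trivially since $\pi_3=0$); the degree-$6$ and degree-$7$ ones are multiples of $w_6$ and $w_7$, whose pull-backs are $w_6(M^7)$ and $w_7(M^7)$. By Wu's formula, $w_6(M^7)=v_3(M^7)^2=(w_1w_2)^2=0$ and $w_7(M^7)=0$ for a closed oriented $7$-manifold. Hence all obstructions vanish, $TM^7$ reduces to $K$ and so to $G_{2(2)}$, and $M^7$ carries a $3$-form of $\tilde G_2$-type. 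The technical heart is this second stage: computing $\pi_*(G_2/K)$ and the relative $k$-invariants of $BK\to BG_2$, identifying the top two with $w_6$ and $w_7$ and controlling their indeterminacy, since $\dim M^7=7$ sits at the edge of the metastable range.

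\textbf{Part (2).} Let $M^7$ be open and embedded in a compact orientable spinnable $7$-manifold $N^7$. By part~(1), $N^7$ carries a $3$-form $\psi$ of $\tilde G_2$-type; its restriction to $M^7$ is a (not necessarily closed) $3$-form of $\tilde G_2$-type, i.e.\ a section of the open, fibrewise $\GL(7,\R)$-invariant differential relation $\mathcal R\subset\Lambda^3 T^*M^7$ whose fibre is the $\tilde G_2$-orbit. Since $M^7$ is an open manifold, the $h$-principle for closed forms valued in an open invariant relation over an open manifold --- Gromov's theorem in the form of Eliashberg--Mishachev \cite{EM2002} --- applies, and the formal solution $\psi|_{M^7}$ deforms through sections of $\mathcal R$ to a closed (indeed exact) one. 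This produces a closed $3$-form of $\tilde G_2$-type on $M^7$, proving~(2).
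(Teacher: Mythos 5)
Your necessity argument in (1) and your proof of (2) are sound and essentially the paper's: orientability from connectedness of $\tilde G_2\subset\GL^+(7,\R)$, spinnability from the fact that the maximal compact $\SO(4)\subset\tilde G_2$ (your $\pi_1$-computation replaces the paper's remark that this $\SO(4)$ sits inside the compact $G_2$), and for (2) the restriction of the form from the ambient compact manifold followed by the Eliashberg--Mishachev $h$-principle, exactly as in the paper.

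The gap is in the sufficiency half of (1). The paper disposes of it in two lines: by Friedrich--Kath--Moroianu--Semmelmann \cite{FKMS1997} a compact orientable spin $7$-manifold admits an $SU(2)$-structure, and $SU(2)$ sits inside the $\SO(4)_{3,4}\subset\tilde G_2$ of \cite[p.~115]{HL1982}, so the structure group reduces to $\tilde G_2$. You instead run a two-stage obstruction-theory argument $\mathrm{Spin}(7)\to G_2\to\SO(4)$, and the second stage is not actually carried out. The obstructions to lifting the classifying map along $B\SO(4)\to BG_2$ are \emph{not} pullbacks of classes in $H^*(BG_2;\pi_i(G_2/\SO(4)))$: beyond the primary one they live in the cohomology of the intermediate Moore--Postnikov stages, which have more classes than $BG_2$, and they carry indeterminacy; so the assertion that the degree-$6$ and degree-$7$ obstructions ``are multiples of $w_6$ and $w_7$'' is exactly the unproved point, not a consequence of $H^6(BG_2;\Z_2)=\langle w_6\rangle$, $H^7(BG_2;\Z_2)=\langle w_7\rangle$. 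This matters because the top obstruction lies in $H^7(M^7;\pi_6(G_2/\SO(4)))$ with $\pi_6(G_2/\SO(4))$ a nontrivial finite group (its computation itself needs the nonobvious fact that $\pi_6(\SO(4))\to\pi_6(G_2)=\Z_3$ is onto, which you assert without proof), and $H^7$ of a closed $7$-manifold with finite coefficients is nonzero, so the final obstruction does not vanish for dimension reasons and must genuinely be identified. Since you yourself flag this step as ``the technical heart'' and leave it open, the proposal does not yet prove the ``if'' direction; either complete that computation or replace it by the paper's reduction to the $SU(2)$-structure result of \cite{FKMS1997}.
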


\begin{proof}
	First we recall that  the maximal compact Lie subgroup  of $\tilde G_2$  is $\SO(4)$.  This    follows from the Cartan theory on symmetric spaces. 
  We refer to \cite[p. 115]{HL1982} for an explicit
 	embedding of $\SO(4)$ into $G_2$. The reader can also check that the image of this  group  is also a subgroup
 	of $\tilde G_2\subset \GL (\R ^7)$.    We shall denote this image by
 	$SO(4)_{3,4}$.
 	
 	Now  assume that  a  smooth  manifold $M^7$ admits a $\tilde G_2$-structure. Then it must be orientable and spinnable, since  the maximal compact Lie subgroup $\SO(4)_{3,4}$ of  $G_2$ is also a  compact subgroup of the group $G_2$.
 	%

 	\begin{lemma}\label{lem:7cos}  Assume that $M^7$ is compact, orientable  and spinnable.  Then $M^7$ admits a
 		$\tilde G_2$-structure.
 	\end{lemma}
 	
 	\begin{proof}
 		Since $M^7$ is compact, orientable  and spinable, $M^7$ admits a SU(2)-structure \cite{FKMS1997}.
 		Since $SU(2)$ is a subgroup of $SO(4)_{3,4}$,  $M^7$  admits a $SO(4)_{3,4}$-structure. Hence $M^7$ admits a $\tilde  G_2$-structure.
 	\end{proof}

 	This completes the proof of  the first  assertion of Theorem \ref{thm:gt2}.

 	\

 	Let us  prove  the last statement of Theorem \ref{thm:gt2}.  Assume that $M^7$ is  a smooth  open manifold which admits an embedding into  a compact orientable  and spinnable  7-manifold. Taking  into account  the   first assertion  of  Theorem \ref{thm:gt2},  there exists a 3-form $\varphi$  on $M^7$ of $\tilde G_2$-type.  We shall use the  following theorem due to Eliashberg-Mishachev
 	to deform the 3-form
 	$\varphi$ to a closed 3-form $\bar \varphi$ of $\tilde G_2$-type on $M^7$.
 	
 	Let $M$ be a smooth manifold  and $a \in H^p(M, \R)$.  For  a subspace $\Rr \subset \Lambda ^p M$  we denote by
 	$Clo_a \Rr$ the  subspace of the space $\Gamma (M,\Rr)$ of smooth    sections $M \to \Rr$  that consists of closed $p$-forms
 	$\om \in \Gamma(M, \Rr) \subset \Om^p (M)$  such  that   $[\om] = a\in H^p(M,\R)$. Denote by $\Di (M)$  the diffeomorphism  group  of  $M$.

 	\begin{proposition}[Eliashberg-Mishashev Theorem] (\cite[10.2.1]{EM2002}) Let $M$ be an open manifold,
 		$a \in H^p (M,\R)$  and  $\Rr\subset \Lambda ^pM$ an open $\Di (M)$-invariant subset.
 		Then the inclusion
 		$$ Clo _a \Rr \INTO \Gamma(M, \Rr)$$
 		is a homotopy equivalence. In particular,
 		
 		- any $p$-form $\om \in \Gamma( M, \Rr)$ is homotopic  in  $\Rr$ to a closed form $\bar \om$;
 		
 		- any homotopy $\om_t \in \Gamma(M, \Rr)$   of $p$-forms which connects two closed forms
 		$\om_0, \om_1 $   such that $[\om_0] = [\om_1] = a \in H^p(M, \R)$ can be deformed in $\Rr$ into a homotopy of
 		closed forms $\bar \om_t$ connecting $\om_0$ and $\om_1$  such that $[\om_t] = a$  for all $t$.
 	\end{proposition}

 	Let $\Rr$  be the space
 	of all 3-forms of $\tilde G_2$-type on   $M^7$.  Clearly this space is an open $\Di (M^7)$-invariant  subset
 	of $\Lambda ^3 M^7$. Now we apply the  Eliashberg-Mishashev theorem to the  3-form $\varphi^3$
 	of $\tilde G_2$-type whose existence  has been proved above.  This  completes the proof   of  Theorem \ref{thm:gt2}.
 \end{proof}


\section{Classification of orbits over a nonclosed field\\ of characteristic 0}\label{sec:galois}
by Mikhail Borovoi

\

We consider a linear algebraic group $G$  with group of $k$-points $G(k)$ over an algebraically closed field $k$ of
characteristic 0. Assume that $G$ acts on a $k$-variety $X$ with set of $k$-points  $X(k)$,  and  assume that  we
know the classification of $G(k)$-orbits in $X(k)$,  e.g.,  $k = \C$, $G = \GL (9,\C)$,
$X = \Lambda^3 \C^9$.
Let $k_0$ be a subfield of $k$ such that $k$ is an algebraic closure of $k_0$. We
write $\Gamma= \Gal(k/k_0)$ for the Galois group  of the extension  $k$ over $k_0$.
If $ k_0 = \R$, then $\Gamma = \Gal(\C/\R) = \{ 1, \gamma\}$, where $\gamma$ is the complex conjugation.
Assume that we have   compatible $k_0$-forms   $G_0$ of $G$ and  $X_0$ of $X$.
We wish to classify $G_0(k_0)$-orbits in $X_0(k_0)$.
We start with one $G$-orbit $Y$ in $X$. We check whether $Y$ is $\Gamma$-stable. If not,
then $Y$ has no $k_0$-points.
Assume that $Y$ is $\Gamma$-stable. Then the $\Gamma$-action on $Y$ defines a $k_0$-model $Y_0$ of $Y$.
Now $G_0$ acts on $Y_0$ over $k_0$. We say
that $Y_0$ is (a twisted form of) a homogeneous space of $G_0$. We  ask

(1)  whether $Y_0$ has $k_0$-points;

(2) if  the answer to (1)  is positive, we wish to classify $G(k_0)$-orbits in $Y_0 (k_0 )$.

Question (1) is treated in  \cite{Borovoi1993}.
Assume that for our $Y$, the answer to question (1) is Yes. Let $y_0 \in Y_0 (k_0)$,
and let $H_0 = \St_{G_0} (y_0)$. Then we may write $Y_0 = G_0 /H_0$. The Galois group
$\Gamma = \Gal(k/k_0)$ acts compatibly on $G_0 (k) = G(k)$, $H_0 (k) = H(k)$, and $Y_0 (k) =
Y (k) = G(k)/H(k)$.

\begin{theorem}[{\cite{Serre1997}, Section I.5.4, Corollary 1 of Proposition 36}]
\label{thm:serre}
 There is a canonical bijection between the set of orbits $Y_0 (k_0 )/G_0 (k_0 )$ and the kernel
$\ker [H^1 (k_0 , H_0 ) \to  H^1 (k_0 , G_0 )]$.
\end{theorem}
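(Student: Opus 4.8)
The plan is to run the standard twisting argument of nonabelian Galois cohomology (this is the proof in \cite{Serre1997}). Fix the base point $y_0\in Y_0(k_0)$ with $H_0=\St_{G_0}(y_0)$, so that $Y_0=G_0/H_0$ and, over the algebraic closure, $Y(k)=G(k)/H(k)$ with $G(k)$ acting transitively on $Y(k)$. First I would construct a map $\Psi\colon Y_0(k_0)\to H^1(k_0,H_0)$. Given $y\in Y_0(k_0)$, choose $g\in G(k)$ with $g\cdot y_0=y$; this is possible precisely because $G(k)$ acts transitively on $Y(k)$. For $\sigma\in\Gamma$, applying $\sigma$ to $g\cdot y_0=y$ and using that $y_0$ and $y$ are $k_0$-rational gives $\sigma(g)\cdot y_0=y$, whence $c_\sigma:=g^{-1}\sigma(g)\in H(k)$. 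A direct check shows $c_{\sigma\tau}=c_\sigma\cdot{}^{\sigma}c_\tau$, so $\sigma\mapsto c_\sigma$ is a $1$-cocycle of $\Gamma$ with values in $H(k)$; replacing $g$ by $gh$ with $h\in H(k)$ replaces $(c_\sigma)$ by $(h^{-1}c_\sigma\,{}^{\sigma}h)$, a cohomologous cocycle, so the class $\Psi(y):=[(c_\sigma)]\in H^1(k_0,H_0)$ is well defined. Since $c_\sigma=g^{-1}\sigma(g)$ is visibly a coboundary in $G(k)$, the image of $\Psi(y)$ in $H^1(k_0,G_0)$ is trivial, so $\Psi$ lands in $\ker[H^1(k_0,H_0)\to H^1(k_0,G_0)]$.

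Next I would verify that $\Psi$ is constant on $G_0(k_0)$-orbits and separates them. If $y'=g_0\cdot y$ with $g_0\in G_0(k_0)$ and $g\cdot y_0=y$, then $(g_0g)\cdot y_0=y'$ and $(g_0g)^{-1}\sigma(g_0g)=g^{-1}g_0^{-1}\sigma(g_0)\sigma(g)=g^{-1}\sigma(g)$ because $\sigma(g_0)=g_0$; hence $\Psi(y')=\Psi(y)$. Conversely, suppose $g\cdot y_0=y$, $g'\cdot y_0=y'$ and the two cocycles are cohomologous, say $g'^{-1}\sigma(g')=h^{-1}\bigl(g^{-1}\sigma(g)\bigr){}^{\sigma}h$ for some $h\in H(k)$. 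Put $g_0:=g'h^{-1}g^{-1}$. Using $\sigma(g)=g\,c_\sigma$, $\sigma(g')=g'\,c'_\sigma$ and the relation between $c'_\sigma$ and $c_\sigma$, one computes $\sigma(g_0)=g_0$, so $g_0\in G_0(k_0)$; and $g_0\cdot y=g'h^{-1}\cdot y_0=g'\cdot y_0=y'$ since $h^{-1}\in H(k)=\St(y_0)$. Thus $\Psi(y)=\Psi(y')$ forces $y,y'$ into the same $G_0(k_0)$-orbit, so $\Psi$ induces an injection $Y_0(k_0)/G_0(k_0)\INTO \ker[H^1(k_0,H_0)\to H^1(k_0,G_0)]$.

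Finally, for surjectivity: let $(c_\sigma)$ be a cocycle valued in $H(k)$ whose class dies in $H^1(k_0,G_0)$, i.e.\ $c_\sigma=g^{-1}\sigma(g)$ for some $g\in G(k)$. Set $y:=g\cdot y_0$. Then $\sigma(y)=\sigma(g)\cdot y_0=g\,c_\sigma\cdot y_0=g\cdot y_0=y$ because $c_\sigma\in H(k)=\St(y_0)$, so $y\in Y_0(k_0)$, and by construction $\Psi(y)=[(c_\sigma)]$. Together with the previous step this yields the canonical bijection asserted in the theorem.

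The genuinely nontrivial input, and the place I expect to have to be careful, is the transitivity of $G(k)$ on $Y(k)$, equivalently the surjectivity of the orbit map $G\to Y$ on $k$-points: this holds because $k$ is algebraically closed (the fibres of $G\to G/H$ are right $H$-torsors, hence trivial over $k$), and it is exactly why the statement is made over an algebraically closed $k$ with a prescribed $k_0$-model. The remaining work is purely bookkeeping — keeping straight the left action of $G$ on $Y$, the semidirect interaction with the $\Gamma$-action, and the nonabelian cocycle/coboundary conventions for the pointed sets $H^1(k_0,H_0)$ and $H^1(k_0,G_0)$ — none of which presents a real obstacle.
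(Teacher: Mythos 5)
Your argument is correct: it is precisely the standard cocycle/twisting proof from Serre's \emph{Galois Cohomology}, I.5.4, which is exactly the source the paper cites for this theorem (the paper itself gives no proof). All the steps — well-definedness of the class $[g^{-1}\sigma(g)]$, invariance and separation of $G_0(k_0)$-orbits, and surjectivity onto the kernel — check out, with the only glossed technicality being the (routine) continuity of the cocycles, so there is nothing to add.
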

Here $H^1 (k_0 , H_0 ) := H^1 (\Gamma, H_0 (k))$.


\section*{Acknowledgement}  The authors   would like to thank  Professor Alexander Elashvili and  Professor Andrea  Santi     for  their interest   in this  subjects   and      for   their suggestions of    references,  Professor Lemnouar Noui  for  sending  us  a  copy of the PhD Thesis  of Midoune \cite{Midoune2009}  and Professor Mahir Can    for   his  helpful comments    on a  preliminary version  of this   paper.
We  are grateful to  Professor Mikhail Borovoi  for  his help  in literature  and for his   writing up    an explanation of  the  Galois cohomology method   for finding  real forms  of complex orbits, which  we  put  as an Appendix   to this paper.

\end{document}